\newtheorem{theorem}{Theorem}[section]
\newtheorem{lemma}[theorem]{Lemma}
\newtheorem{corollary}[theorem]{Corollary}
\theoremstyle{definition}
\theoremstyle{remark}
\newtheorem{remark}[theorem]{Remark}
\numberwithin{equation}{section}
\newcommand{\N}{\mathbb{N}}
\newcommand{\R}{\mathbb{R}}
\newcommand{\C}{\mathbb{C}}
\newcommand{\mP}{\mathbb{P}}
\newcommand{\1}{\mathbbm{1}}
\newcommand{\cB}{\mathcal{B}}
\newcommand{\cF}{\mathcal{F}}
\newcommand{\cL}{\mathcal{L}}
\newcommand{\al}{\alpha}
\newcommand{\be}{\beta}
\newcommand{\ga}{\gamma}
\newcommand{\del}{\delta}
\newcommand{\la}{\lambda}
\newcommand{\ep}{\varepsilon}
\newcommand{\vp}{\varphi}
\newcommand{\sh}{\theta}
\newcommand{\om}{\omega}
\newcommand{\Om}{\Omega}
\newcommand{\hOm}{\hat{\Omega}}
\newcommand{\hmP}{\hat{\mathbb{P}}}
\newcommand{\hbe}{\hat{\beta}}
\begin{document}

\title[random dynamics of beta-transformations]{Absolutely continuous invariant measures for random dynamical systems of beta-transformations}



\author{Shintaro Suzuki}
\address{Department of Mathematics, Tokyo Gakugei University, Nukuikita 4-1-1, Koganei, Tokyo 184-8501, Japan}
\email{shin05@u-gakugei.ac.jp}

\subjclass[2020]{37E05, 37A44, 37A50 \and 37D20}

\begin{abstract}
We consider an independent and identically distributed (i.i.d.) random dynamical system of simple linear transformations on the unit interval $T_{\beta}(x)=\beta x$ (mod $1$), $x\in[0,1]$, $\beta>0$, which are the so-called beta-transformations. For such a random dynamical system, including the case that it is generated by uncountably many maps, we give an explicit formula for the density function of a unique stationary measure under the assumption that the random dynamics is expanding in mean. As an application, in the case that the random dynamics is generated by finitely many maps and the maps are chosen according to a Bernoulli measure, we show that the density function is analytic as a function of parameter in the Bernoulli measure
and give its derivative explicitly. 
Furthermore, for a non-i.i.d. random dynamical system of beta-transformations, we also give an explicit formula for the random densities of a unique absolutely continuous invariant measure under a certain strong expanding condition or under the assumption that the maps randomly chosen are close to the beta-transformation for a non-simple number in the sense of parameter $\be$.

\end{abstract}
\maketitle
\begin{section}{Introduction}
Deterministic dynamical systems generated by a piecewise smooth expanding map on the unit interval have been intensively investigated as a simple model of  the so-called chaotic dynamical systems. A key object in order to analyze the ergodic properties of such a dynamical system is an absolutely continuous (with respect to the Lebesgue measure) invariant probability measure.
One of the methods to ensure its existence is to
use the associated Perron-Frobenius operator, 
based on the fact that a non-negative integrable function whose integral equals to $1$ is a fixed point of the associated operator if and only if it is an invariant density, i.e., the density function of some absolutely continuous invariant probability measure (see \cite{La-Yo}). 
The method via Perron-Frobenius operators is also efficient in the context of random dynamical systems generated by piecewise smooth maps on the interval, which was established in \cite{Bu}, \cite{Mo1}, \cite{Pe} and \cite{In} for example.
Compared with a deterministic system, a random dynamical system is represented by a pseudo skew-product map and a natural reference measure in this setting is the product of the measure on the `noise space' and the Lebesgue measure on the unit interval. 

A key ingredient in constructing an invariant density via the associated Perron-Frobenius operator is to show sequential compactness of a sequence of functions given by $n$-iterations of the associated operator applied to some positive function. This yields that an invariant density is obtained as the limit of some subsequence of it. In general, we have no explicit formula for the limit and no knowledge to a closed form of the invariant density, which provide a motivation to investigate its explicit formula in anothor way.

For deterministic piecewise linear expanding maps, an explicit formula for an invariant density can be given. For beta-transformations, R\'enyi  \cite{Re} gave an explicit formula for a unique invariant density in case that parameter $\be$ is  the golden ratio. Parry \cite{Pa1} and Gel'fond \cite{Ge} independently extended the result to any cases. The corresponding result to various piecewise linear expanding maps has been given, for example, for linear unimodal maps \cite{It-Ta}, linear mod $1$ maps \cite{Pa2}, generalized beta-maps \cite{Go1} and so on. For more general cases, Kopf \cite{Ko} obtained an explicit formula for any invariant density of a piecewise linear eventually expanding map, which is given by the determinant of a matrix each of whose component is some function having information about the kneading sequences for critical points of the map. See \cite{Go2} for an analogue result.
  
In light of deterministic cases, it is natural to ask whether we can extend the results for an explicit formula for an invariant density to random cases. The first related result as far as the author knows was given by Tanaka and Ito \cite{Ta-It}, who treated a random dynamics generated by two linear unimodal expanding maps, which are chosen according to the $(1/2,1/2)$-Bernoulli measure. Related to number expressions of base $\be>1$, Dajani and Kraaikamp \cite{Da-Kr} introduced the random beta-transformation generated by two piecewise linear expanding maps each of whose slope is $\be>1$, which are chosen according to a Bernoulli measure. Dajani and de Vries \cite{Da-de} gave an explicit formula for the density function of a unique stationary measure in the case that the greedy expansion of $1$ is finite. Kempton \cite{Ke} gave the corresponding result in the case of any $\beta>1$ and  the $(1/2,1/2)$-Bernoulli measure, by constructing the natural extension of the random map. In \cite{Su}, the author gave an explicit formula for the density function in any cases by solving a functional equation arising from the associated Perron-Frobenius operator. For more general cases, Kalle and Maggioni \cite{Ka-Ma} extended Kopf's method to random dynamical systems 
and gave an explicit formula for the density function of any stationary measure under some mild conditions in the case of those generated by countably many piecewise linear maps which are chosen according to a Bernoulli measure.


In this paper, we consider random dynamical systems of beta-transformations and 
establish some explicit formulas related to their unique invariant densities. 
First, for an i.i.d. random dynamics of beta-transformations, we give an explicit formula for the density function of a unique stationary measure under the assumption that a random dynamical system is expanding in mean (Theorem \ref{Thm A}). 
The novelty of this result is that it includes the case of random dynamical systems generated by uncountably many maps. In particular, we can apply the result to an i.i.d. (not necessary small)  perturbation of a beta-transformation on parameter $\be$.  As an application of Theorem \ref{Thm A}, we give the supremum and infimum of
the density function (Corollary \ref{cor1}). 

Second, as another application of Theorem \ref{Thm A}, we establish a linear response formula for a unique stationary measure in the case that the random dynamics is generated by finitely many beta-transformations and the maps are chosen according to a Bernoulli measure. In detail, we show the analyticity of the stationary measure as a function of the parameter in the Bernoulli measure and give its derivative explicitly (Theorem \ref{Thm B}). 

Finally, for a non-i.i.d. random dynamical system of beta-transformations, we also establish an explicit formula for a unique invariant density, which is the first result in the context of non-i.i.d. random dynamical systems generated by uncountably many maps. We give the explicit formula in the case that the maps chosen are strongly expanding in the sense that each of their slopes is greater than $2$ (Theorem \ref{Thm C}) or in the case that the maps chosen are close to the beta-transformation for a non-simple number in the sense of parameter $\be$ (Theorem \ref{Thm D}), which are done by solving certain functional equations (the equations (\ref{PF}) in Section $5$) arising from samplewise Perron-Frobenius operators.

This paper is organized as follows. In Section 2, we introduce random dynamical systems of beta-transformations and summarize their basic properties. 
In Section 3, we give an explicit formula for the density function of the unique stationary measure of an i.i.d. random dynamics of beta-transformations. 
In Section 4, we show a linear response formula for a unique stationary measure in case that a random dynamics is generated by finitely many beta-transformations and the maps are chosen according to a Bernoulli measure. In Section 5, for a non-i.i.d. random dynamics of beta-transformations, we establish an explicit formula for random densities in the case that the slope of each map randomly chosen is greater than $2$  or in the case that the maps chosen are close to the beta-transformation for a non-simple number.
 
\end{section}

\begin{section}{Preliminaries}
In this section, we summarize some notions used throughout the paper. In subsection 2.1. we introduce random dynamical systems of beta-transformations. In subsection 2.2. we see that  multiple base expansions of $x\in[0,1]$, whose properties have been investigated recently (e.g., \cite{Li}  and \cite{Ne}), are generated by such a random dynamical system. In particular, we give a sufficient condition to guarantee `almost all' expansions actually converge to $x\in[0,1]$. In subsection 2.3, we introduce the Perron-Frobenius operator for a beta-transformation and show a key lemma in order to give the main results in the paper. 

\begin{subsection}{Random dynamics of beta-transformations}
Let $\al>0$ be a positive number. We define the map $T_\al:[0,1]\to[0,1]$ by
\[T_\al(x)=\al x -[\al x]\]
for $x\in[0,1]$, where $[y]$ denotes the integer part of $y\in\R$. In case of $\al>1$
the map is expanding and has a unique invariant probability measure absolutely continuous with respect to 
the Lebesgue measure, whose density function can be given explicitly (see \cite{Pa1}, \cite{Re} and \cite{Ge}).

Let $(\Om, \cF, \mP)$ be a Lebesgue space and $\sh:\Om\to\Om$ be an ergodic measure preserving map on it. We call $(\Om, \cF, \mP)$ the noise space and  $\sh:\Om\to\Om$ the noise dynamics. Let $\be:\Omega\to(0,+\infty)$ be a positive random variable. We define the map $\tau:\Omega\times [0,1]\to [0,1]$ by $(\om,x)\mapsto T_{\be(\om)}(x)$. Then the random dynamical system can be represented by the skew-product map $R(\om,x)=(\sh\om,\tau(\om,x))$ for $(\om,x)\in\Om\times[0,1]$.
For simplicity, we write $\tau_\om(\cdot)=\tau(\om,\cdot)$ for $\om\in\Om$.
By setting $\tau_\om^0=id$ and $\tau_\om^n=\tau_{\sh^{n-1}\om}\circ\dots\tau_\om$ for $\om\in\Om$ and $n\geq1$, where $id$ denotes the identity map on $[0,1]$, we can write the $n$-th iteration of the skew product map as
\[R^n(\om,x)=(\sh^n\om, \tau_\om^n(x))\]
for $(\om,x)\in\Om\times[0,1]$. 
\end{subsection}

\begin{subsection}{Random multiple base expansions}
As a generalization of beta-expansions generated by a single beta-transformation, the random dynamical systems as defined above generate multiple base expansions of $x\in[0,1]$ as follows. Set $\be_\om^{(0)}=1$ and 
\[\be_\om^{(n)}=\prod_{i=0}^{n-1}\be(\sh^{i}\om)\]
for $\om\in\Om$ and $n\geq1$. We write $\be_\om=\be_\om^{(1)}=\be(\om)$ for simplicity. 
Define the digit function by $d_n(\om,x)=[\be_{\sh^{n-1}\om}\tau_{\om}^{n-1}(x)]$ for $(\om,x)\in\Om\times[0,1]$ and $n\geq1$. Then by the definition of the map $\tau$, we have 
\[x=\frac{[\be_\om x]}{\be_\om}+\frac{\tau_\om(x)}{\be_\om}\]
for $(\om,x)\in\Om\times[0,1]$. By the fact that 
\[\tau_{\om}^{n}(x)=\frac{[\be_{\sh^{n}\om} \tau_{\om}^{n}(x)]}{\be_{\sh^n\om}}+\frac{\tau_\om^{n+1}(x)}{\be_{\sh^n\om}}\]
for $n\geq0$, we obtain
\begin{linenomath}
\begin{align*}
x&=\frac{[\be_\om x]}{\be_\om}+\frac{\tau_\om(x)}{\be_\om} \\
 &=\frac{[\be_\om x]}{\be_\om}+\frac{[\be_{\sh\om} \tau_\om(x)]}{\be_{\om}\be_{\sh\om}}+\frac{\tau^2_\om(x)}{\be_\om \be_{\sh\om}} \\
&=\cdots \\
&=\sum_{n=1}^N\frac{d_n(\om,x)}{\be_\om^{(n)}}+\frac{\tau_\om^{N}(x)}{\be_{\om}^{(N)}}
\end{align*}
\end{linenomath}
for $N\geq2$. Note that $\tau_\om^{n}(x)\in[0,1]$ for $n\geq0$. Then under the assumption that 
\[(\be_\om^{(n)})^{-1}\to0 \]
as $n\to\infty$, we have 
\[x=\sum_{n=1}^\infty\frac{d_n(\om,x)}{\be_\om^{(n)}}.\]
In the following, we give a sufficient condition which guarantees 
$(\be_\om^{(n)})^{-1}\to0$ as $n\to\infty$ for $\mP$-a.e. $\om\in\Om$.

\begin{lemma}\label{expansion}
Assume that 
\begin{equation}\label{mean expanding}
\int_{\Om}\log \frac{1}{\be(\om)}d\mP<0.
\end{equation}
Then for $\mP$-a.e. $\om\in\Om$ we have 
\[(\be_\om^{(n)})^{-1}\to0\]
as $n\to\infty$.
\end{lemma}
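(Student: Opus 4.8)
The plan is to pass to logarithms so that the product $\be_\om^{(n)}=\prod_{i=0}^{n-1}\be(\sh^i\om)$ turns into a Birkhoff sum, and then invoke the ergodic theorem. First I would set $f(\om)=\log\frac{1}{\be(\om)}$ and record the identity
\[
\log(\be_\om^{(n)})^{-1}=-\sum_{i=0}^{n-1}\log\be(\sh^i\om)=\sum_{i=0}^{n-1}f(\sh^i\om),
\]
so that the desired conclusion $(\be_\om^{(n)})^{-1}\to0$ is equivalent to $\sum_{i=0}^{n-1}f(\sh^i\om)\to-\infty$ as $n\to\infty$.

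Next, since $\sh$ is an ergodic measure-preserving transformation of $(\Om,\cF,\mP)$ and $f$ is precisely the integrand appearing in the hypothesis (\ref{mean expanding}), I would apply Birkhoff's ergodic theorem to $f$. This gives, for $\mP$-a.e. $\om\in\Om$,
\[
\frac{1}{n}\sum_{i=0}^{n-1}f(\sh^i\om)\xrightarrow[n\to\infty]{}\int_\Om f\,d\mP=\int_\Om\log\frac{1}{\be(\om)}\,d\mP=:c.
\]
By assumption $c<0$. Writing the Birkhoff sum as $n$ times its average, the average tends to $c<0$ while $n\to\infty$, hence $\sum_{i=0}^{n-1}f(\sh^i\om)\to-\infty$ for $\mP$-a.e.\ $\om$, and exponentiating yields $(\be_\om^{(n)})^{-1}\to0$.

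The only point needing care is the integrability required by Birkhoff's theorem. The cleanest case is $f\in L^1(\mP)$, i.e.\ $\int_\Om|\log\be|\,d\mP<\infty$, under which the argument above is immediate. If one only assumes that the integral in (\ref{mean expanding}) is well-defined in $[-\infty,0)$—which in particular requires the positive part $f^+=\log^+\frac{1}{\be}$ to be integrable—then I would instead invoke the standard extension of Birkhoff's theorem valid when $\int_\Om f^+\,d\mP<\infty$: the averages still converge $\mP$-a.e.\ to $\int_\Om f\,d\mP\in[-\infty,0)$, and the conclusion $\sum_{i=0}^{n-1}f(\sh^i\om)\to-\infty$ follows exactly as before. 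Thus the substance of the proof is a single application of the ergodic theorem, and I expect the verification of this integrability hypothesis to be the only (minor) obstacle.
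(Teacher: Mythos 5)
Your proposal is correct and follows essentially the same route as the paper: apply Birkhoff's ergodic theorem to $\log(1/\be(\om))$ and deduce from the strictly negative mean that the Birkhoff sums, hence $\log(\be_\om^{(n)})^{-1}$, tend to $-\infty$ (the paper phrases this as an explicit exponential lower bound $\be_\om^{(n)}\geq C_\om e^{cn}$ with $c>0$). Your closing remark on the integrability hypothesis needed for Birkhoff's theorem is a point the paper passes over silently, but it does not change the argument.
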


\begin{remark}
In the case that the noise dynamics $\sh:\Om\to\Om$ is invertible, the condition (\ref{mean expanding}) is also a sufficient condition to guarantee  the existence of a unique  invariant probability measure for the skew-product map $R$ absolutely continuous with respect to $\mP\times l$ (see \cite{Bu}).
\end{remark}

\begin{proof}
By applying the Birkhoff ergodic theorem to the function $\log(1/ \be(\om))$, we see that
\[\frac{1}{n}\sum_{i=0}^{n-1}\log(\be(\sh^i\om))^{-1}\to \int_{\Om}\log \frac{1}{\be(\om)}d\mP\] 
for $\mP$-a.e. $\om\in\Om$, which yields that there is some positive constant $C_\om>0$ such that 
\[\be_\om^{(n)}=\prod_{i=0}^{n-1}\be(\sh^i\om)\geq C_\om \exp\Biggl(-\frac{1}{2}\int_{\Om}\log \frac{1}{\be(\om)}d\mP\cdot n\Biggr) \to \infty\]
as $n\to\infty$ for $\mP$-a.e. $\om\in\Om$. This finishes the proof. 
\end{proof}

\end{subsection}

\begin{subsection}{Samplewise Perron-Frobenius operators}
Let us denote by $l$ the Lebesgue measure on $[0,1]$ and by $(L^1(l), ||\cdot||_1)$ the Banach space of integrable functions with respect to the Lebesgue measure 
$l$. For a positive constant $\alpha>0$, we define the Perron-Frobenius operator for  the corresponding map $T_\al$ by
\[\cL_\al f(x)=\frac{1}{\al}\sum_{y ; x=T_\al(y)} f(y)\]
for $x\in[0,1]$ and $f\in L^1(l)$. This operator is linear and bounded on $L^1(l)$ with $||\cL_\alpha f||_1=||f||_1$ for $f\in L^1(l)$, whose spectral radius is $1$ (see \cite{Ba}, \cite{Bo-Go} and \cite{La-Yo} for example). 

For a function $f:[0,1]\to\R$ denote by $\bigvee f$ the total variation of $f$. We define 
\[|f|_{BV}=\inf\Biggl\{\bigvee f^* ; f^* \text{is a version of} \ f\Biggr\}.\]
Set $BV=\{f\in L^1(l)\ ;\ |f|_{BV}<+\infty\}$ and
\[||f||_{BV}=||f||_1+|f|_{BV}\]
for $f\in BV$. Then $(BV, ||\cdot||_{BV})$ is a Banach space and the Perron-Frobenius operator is  linear and bounded on $(BV, ||\cdot||_{BV})$ 
(e.g., \cite{Ba}, \cite{Bo-Go}, \cite{La-Yo} and \cite{Li-Yo}). In the case of $\alpha>1$, we know that the  map $T_\alpha$ is uniformly expanding and the corresponding Perron-Frobenius operator $\cL_\alpha$ is quasi-compact, i.e., for any spectral value $\la\in\C$ whose modulus is greater than $1/\al$ is an isolated eigenvalue with finite multiplicity, which plays an important role in investigating the ergodic properties of $T_\al$ (e.g., \cite{Ba}, \cite{Bo-Go}, \cite{La-Yo} and \cite{Li-Yo}). 

In the following, we write $\cL_\om=\cL_{\be_\om}$ for $\om\in\Om$. The next lemma is a key fact to show the main results in the paper.

\begin{lemma}\label{key}
Let $a\in[0,1]$ and $\om\in\Om$. Then 
\[\cL_\om \1_{[0,a]}=\frac{[\be_\om a]}{\be_\om}\1_{[0,1]}+\frac{1}{\be_\om}\1_{[0,\tau_\om(a)]},\]
where $\1_A$ denotes the indicator function of $A$.

\end{lemma}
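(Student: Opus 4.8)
The plan is to verify the identity by a direct pointwise computation of both sides, using only the definition of the Perron--Frobenius operator together with the explicit description of the preimages of $T_{\be_\om}$. Write $\be=\be_\om$ for brevity. First I would fix $x\in[0,1)$ and describe the preimage set $\{y\in[0,1] : T_\be(y)=x\}$. Since $T_\be(y)=\be y-[\be y]$, a point $y$ is a preimage of $x$ precisely when $\be y=k+x$ for some integer $k\geq0$, that is $y=y_k:=(k+x)/\be$, subject to $y_k\in[0,1]$.

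By definition, $\cL_\om\1_{[0,a]}(x)=\be^{-1}\#\{k\geq0 : y_k\in[0,a]\}$. I would then observe that the condition $y_k\le a$ reads $k\le\be a-x$, and that since $a\le1$ this automatically forces $y_k\le1$, so the constraint $y_k\in[0,1]$ is subsumed. Counting the non-negative integers $k$ with $k\le\be a-x$ gives $[\be a-x]+1$ when $\be a-x\geq0$ and $0$ otherwise; hence
\[\cL_\om\1_{[0,a]}(x)=\frac{1}{\be}\bigl([\be a-x]+1\bigr)\1_{\{\be a-x\geq0\}}.\]

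To match this with the right-hand side I would write $\be a=[\be a]+\tau_\om(a)$, where $\tau_\om(a)=\be a-[\be a]\in[0,1)$ is exactly the fractional part, and split into two regions. For $x\le\tau_\om(a)$ one has $\be a-x=[\be a]+(\tau_\om(a)-x)$ with $\tau_\om(a)-x\in[0,1)$, so $[\be a-x]=[\be a]$ and the displayed expression equals $\be^{-1}([\be a]+1)$, in agreement with $\tfrac{[\be a]}{\be}+\tfrac1\be$. For $x>\tau_\om(a)$ one has $\tau_\om(a)-x\in[-1,0)$; when $[\be a]\geq1$ this gives $[\be a-x]=[\be a]-1$ and value $\be^{-1}[\be a]$, while when $[\be a]=0$ the quantity $\be a-x$ is negative and the value is $0=\be^{-1}[\be a]$. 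In either case the result is $\tfrac{[\be a]}{\be}$, which is exactly the right-hand side for $x>\tau_\om(a)$. Comparing the two regions with $\tfrac{[\be a]}{\be}\1_{[0,1]}+\tfrac1\be\1_{[0,\tau_\om(a)]}$ then completes the verification.

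The computation is elementary; the only delicate point, which I would treat carefully, is the bookkeeping of the integer part $[\be a-x]$ across the break at $x=\tau_\om(a)$ and the simultaneous handling of the positivity constraint $\be a-x\geq0$, so that the two cases glue into the single indicator $\1_{[0,\tau_\om(a)]}$. Since any remaining discrepancies occur only at the endpoints (a Lebesgue-null set), the identity holds both pointwise away from these points and as an equality in $L^1(l)$, which is all that is required in the sequel.
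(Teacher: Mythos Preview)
Your proof is correct and follows essentially the same approach as the paper: both arguments count the preimages of $x$ that lie in $[0,a]$ and split according to whether $x$ falls below or above $\tau_\om(a)$, arriving at the values $[\be_\om a]+1$ and $[\be_\om a]$ respectively. Your version is slightly more explicit in parametrizing the preimages as $y_k=(k+x)/\be$ and computing via $[\be a-x]$, while the paper phrases the count directly in terms of the branch structure of $\tau_\om$, but the underlying idea is identical.
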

\begin{proof}
Since the image of the map $\tau_\om$ on $[i/\be_\om, (i+1)/\be_\om)$ is $[0,1)$ for $0\leq i\leq [\be_\om]-1$, by definition we have  
\begin{linenomath}
\begin{align*}
\be_\om\cL_\om\1_{[0,a]}
&=\#\{y\in[0,a]\ ;\ \tau_\om(y)=x\} \\
&=\begin{cases}
[\be_\om a]+1 & \text{ if } x\in(0,\tau_\om(a)), \\
[\be_\om a] & \text{ if } x\in (\tau_\om(a), 1).
\end{cases}
\end{align*}
\end{linenomath}
Together with the fact that for any $t\in(0,1)$ the functions $\1_{(0,t)}$ and $\1_{[0,t]}$ has the same version in $L^1(l)$, we obtain the conclusion.

\end{proof}

\end{subsection}
\end{section}

\begin{section}{Invariant densities for i.i.d. cases }
This section is devoted to giving an explicit formula for the density function of a unique stationary 
measure for i.i.d. random dynamical systems of beta-transformations. Throughout this section we assume that the noise space is given by the product space $(\Om, \cF, \mP)=(\hat{\Om}, \hat{\cF}, \hat{\mP})^{\N}$ of some Lebesgue space $(\hat{\Om}, \hat{\cF}, \hat{\mP})$ and the noise dynamics is the left shift $\theta:\Om\to\Om$, i.e., 
$\theta(\{\om_i\}_{i=1}^\infty)=\{\om_{i+1}\}_{i=1}^\infty$ for $\{\om_{i}\}_{i=1}^\infty\in\Om$. Let $\hat{\be}:\hat{\Om}\to(0,+\infty)$ be a measurable function and define the function $\be:\Om\to(0,+\infty)$ by
$\be(\{\om_{i}\}_{i=1}^\infty)=\hat{\be}(\om_1)$ for $\{\om_{i}\}_{i=1}^\infty\in\Om$. Then the function $\be$ is a positive measurable function on $(\Om, \cF)$ and the random variables $\{\be\circ\sh^n\}_{n=0}^\infty$ are i.i.d. on $(\Om, \cF, \mP)$. 

Let us define the sample averaged (or annealed) Perron-Frobenius operator $\cL: L^1(l)\to L^1(l)$ by 
\[\cL f=\int_{\Om}(\cL_\om f) d\mP\]
for $f\in L^1(l)$. We note that this operator is well-defined by the inequality 
\[||\cL_\om f||_1\leq\int_0^1\int_\Om|\cL_\om f| d\mP dl\leq\int_\Om\int_0^1|\cL_\om f| dl d\mP\leq||f||_1\]
for $f\in L^1(l)$, which also ensures that the operator $\cL$ is bounded. The fact that the operator $\cL$ is linear follows from its definition. A key ingredient in order to show the existence of an $R$-invariant probability measure absolutely continuous with respect to the product measure $\mP\times l$ is that for $h\in L^1(l)$ with $h\geq0$ and $||h||_1=1$ the probability measure $\mP\times hl$ is $R$-invariant if and only if $h$ satisfies $\cL h=h$. 
Furthermore, since the random variables $\{\be\circ\sh^n\}_{n=0}^\infty$ are i.i.d. on $(\Om, \cF, \mP)$, we know that an $R$-invariant probability measure $\mu$ absolutely continuous with respect to the product measure $\mP\times l$ has the product form $\mu=\mP\times m$ (Theorem 3.1 in \cite{Mo2}), where $m$ is a probability measure absolutely continuous with respect to $l$.
This yields that the density function of any absolutely continuous invariant probability measure of $R$ is given by the form $\1_\Om\times h$, where $h$ is a non-negative normalized eigenfunction of $\cL$ associated to an eigenvalue $1$.

We assume that 
\[\int_\Om\frac{d\mP(\om)}{\be(\om)}=\int_{\hat{\Om}}\frac{d\hat{\mP}(a)}{\hat{\be}(a)}<1.\]
By applying Lemma 6.7 in \cite{In} to our setting, we have that  $\cL$ is well-defined as a linear bounded operator on $BV$ and it is quasi-compact, i.e., any spectral value $\la\in\C$ whose modulus is greater than $\int_\Om d\mP(\om)/\be(\om) $ is an isolated eigenvalue with finite multiplicity. In addition, $1$ is actually this eigenvalue of $\cL$, which ensures the existence of an $R$-invariant probability measure $\mu$ absolutely continuous with respect to $\mP\times l$. 

In fact, we can see the uniqueness of such a measure as follows. By applying to Theorem 5.6 in \cite{In}, we have that there exist a positive integer $N$, a finite set of $BV$ functions $\{h_i\}_{i=1}^N$, a finite set of linear functionals $\{\eta_i\}_{i=1}^N$ and a linear bounded operator $Q:L^1(l)\to L^1(l)$ such that
\[\cL f=\sum_{n=1}^N \eta_{n}(f) h_n+Qf\]
for $f\in L^1(l)$, where $h_i h_j=0$ for $i\neq j$, $\cL h_i=h_{i+1}$ for $1\leq i< N$, $\cL h_N=h_1$
and $||Q||<1$. We note that the support of a function of bounded variation is represented by a countable union of open intervals. Let $h$ be an element of $\{h_i\}_{i=1}^N$ and take an open interval $I$ included in the support of $h$. 
By Lemma \ref{expansion}, for almost all $\om\in\Om$ we have that $\be_\om^{(n)}\to\infty$ as $n\to\infty$, which ensures that for almost all $\om\in\Om$ there is a positive integer $n(\om)$ such that $\be(\theta^{n(\om)}\om)>1$ and $\tau_\om^{n(\om)}(I)\cap\{1/\be(\theta^{n(\om)}\om),\dots,[\be(\theta^{n(\om)}\om)]/\be(\theta^{n(\om)}\om)\}\neq\emptyset$.
Then by the definition of $\tau_\om$ we obtain $0\in\tau_\om^{n(\om)+1}(I)$. 
Since $0$ is a common fixed point of all $\tau_\om$ and the image of $[0,1/\be(\om))$ by $\tau_\om$ is $[0,1)$ if $\be_\om>1$, together with the fact that $\be_\om^{(n)}\to\infty$ as $n\to\infty$, there is a positive integer $k(\om)$ such that $\tau_\om^{n(\om)+k(\om)}(I)=[0,1)$. 
This yields $(\mP\times l)(R^n(\Om\times I))\to 1$ as $n\to \infty$, which shows the support of $h\in\{h_i\}_{i=1}^N$ has full measure $1$. Since $\{h_i\}_{i=1}^N$ satisfies $h_i h_j=0$ for $i\neq j$, we obtain that $N=1$.


Let us denote by $\mu=\mP\times\hat{\mu}$ the unique $R$-invariant probability measure absolutely continuous with respect to $\mP\times l$.
We call $\hat{\mu}$ the stationary measure of the random dynamical system $R$. In the following, for $n\geq1$ we put $\tau_{\om_1\dots\om_n}=\tau_{\om_n}\circ\cdots\circ\tau_{\om_1}$ for $\om_i\in\hat{\Om}$ for $1\leq i\leq n$ and 
denote by $\hat{\mP}^n$ the product of $n$-copies of the measure $\hat{\mP}$ defined on $(\hat{\Om}^n, \cF^n)$.
The main result of this section is the following explicit formula for the  density function of the stationary measure $\hat{\mu}$:

\begin{theorem}\label{Thm A}
Assume that 
\[\int_\Om\frac{d\mP(\om)}{\be(\om)}=\int_{\hat{\Om}}\frac{d\hat{\mP}(a)}{\hat{\be}(a)}<1.\]
Set 
\[\phi(x)=\1_{[0,1]}(x)+\sum_{n=1}^\infty \int_{\hOm^n}\frac{\1_{[0,\tau_{\om_1\dots\om_n}(1)]}(x)}{\prod_{i=1}^n\hbe(\om_i)}d\hmP^n(\om_1\dots\om_n).\]

\noindent
Then $\phi$ is in $BV$ and satisfies $\cL\phi=\phi$. That is, the normalized function $\phi/||\phi||_1$ is the density function of the stationary measure $\hat{\mu}$.

\end{theorem}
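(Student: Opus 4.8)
The plan is to write $\phi$ as the series $\phi=\sum_{n=0}^\infty\phi_n$, where $\phi_0=\1_{[0,1]}$ and, for $n\ge 1$,
\[
\phi_n=\int_{\hOm^n}\frac{\1_{[0,\tau_{\om_1\dots\om_n}(1)]}}{\prod_{i=1}^n\hbe(\om_i)}\,d\hmP^n(\om_1\dots\om_n),
\]
and to verify the two assertions ($\phi\in BV$ and $\cL\phi=\phi$) separately, both being controlled by $\rho:=\int_{\hOm}d\hmP(a)/\hbe(a)<1$. First I would show $\phi\in BV$. Each $\phi_n$ is an average over $\hOm^n$ of indicators $\1_{[0,c]}$ with $c=\tau_{\om_1\dots\om_n}(1)\in[0,1]$, weighted by $\prod_i\hbe(\om_i)^{-1}$. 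Since $\|\1_{[0,c]}\|_1=c\le1$ and $|\1_{[0,c]}|_{BV}\le 1$, and since the total variation seminorm of an average is bounded by the average of the seminorms (by its convexity and lower semicontinuity), Fubini together with the i.i.d.\ structure gives $\|\phi_n\|_1\le\rho^n$ and $|\phi_n|_{BV}\le\rho^n$. As $\rho<1$, the series $\sum_n\phi_n$ converges in $\|\cdot\|_{BV}$, so $\phi\in BV$, and in particular $\phi\in L^1$.

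Next I would establish $\cL\phi=\phi$. Because $\be(\om)=\hbe(\om_1)$ depends only on the first coordinate, the annealed operator acts as $\cL f=\int_{\hOm}\cL_{\hbe(a)}f\,d\hmP(a)$. Applying $\cL$ to $\phi$ termwise (legitimate since $\cL$ is bounded on $L^1$ and $\sum_n\phi_n$ converges there) and interchanging $\cL$ with the Bochner integral over $\hOm^n$, I would compute $\cL\phi_n$ from Lemma~\ref{key} with $a=\tau_{\om_1\dots\om_n}(1)$. Writing the fresh integration variable of $\cL$ as $\om_{n+1}$ and using the composition identity $\tau_{\om_{n+1}}\circ\tau_{\om_1\dots\om_n}=\tau_{\om_1\dots\om_{n+1}}$, the second term produced by Lemma~\ref{key} is exactly $\phi_{n+1}$ while the first is a multiple of $\1_{[0,1]}$, yielding the recursion
\[
\cL\phi_n=c_n\1_{[0,1]}+\phi_{n+1},\qquad c_n=\int_{\hOm^{n+1}}\frac{[\hbe(\om_{n+1})\,\tau_{\om_1\dots\om_n}(1)]}{\prod_{i=1}^{n+1}\hbe(\om_i)}\,d\hmP^{n+1}.
\]
Summing over $n\ge0$ and using $\sum_{n\ge1}\phi_n=\phi-\1_{[0,1]}$ gives $\cL\phi=\bigl(\sum_{n\ge0}c_n-1\bigr)\1_{[0,1]}+\phi$, so everything reduces to the scalar identity $\sum_{n\ge0}c_n=1$.

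To finish, I would exploit the integer-part identity $[\hbe(\om_{n+1})\tau_{\om_1\dots\om_n}(1)]=\hbe(\om_{n+1})\tau_{\om_1\dots\om_n}(1)-\tau_{\om_1\dots\om_{n+1}}(1)$, which follows from $T_{\hbe(\om_{n+1})}(\tau_{\om_1\dots\om_n}(1))=\tau_{\om_1\dots\om_{n+1}}(1)$. Dividing by $\prod_{i=1}^{n+1}\hbe(\om_i)$ and integrating gives the telescoping relation $c_n=a_n-a_{n+1}$, where $a_n=\int_{\hOm^n}\frac{\tau_{\om_1\dots\om_n}(1)}{\prod_{i=1}^n\hbe(\om_i)}\,d\hmP^n$ and $a_0=1$. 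Hence $\sum_{n=0}^N c_n=1-a_{N+1}$, and since $\tau_{\om_1\dots\om_{N+1}}(1)\le1$ forces $a_{N+1}\le\rho^{N+1}\to0$, I conclude $\sum_n c_n=1$ and therefore $\cL\phi=\phi$; normalizing by $\|\phi\|_1$ then identifies $\phi/\|\phi\|_1$ as the density of $\hat\mu$, as discussed before the theorem. (Alternatively $\sum_n c_n=1$ can be read off at once from the fact that $\cL$ preserves the Lebesgue integral, once $\phi\in L^1$ is known.) I expect the only delicate points to be the interchanges—of $\cL$ with the infinite sum and with the finite-dimensional integrals, and of the variation seminorm with the integral—but each is routine and uniformly controlled by the geometric bound from $\rho<1$; the substance of the argument is the telescoping furnished by Lemma~\ref{key} together with the integer-part identity.
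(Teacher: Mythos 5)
Your proof is correct, and its skeleton coincides with the paper's: decompose $\phi=\sum_n\phi_n$, get the $BV$ bound from the geometric series in $\rho=\int_{\hOm}d\hmP/\hbe<1$, apply $\cL$ termwise via Lemma~\ref{key} to obtain $\cL\phi-\phi=(\sum_n c_n-1)\1_{[0,1]}$, and reduce everything to the scalar identity $\sum_n c_n=1$. Where you genuinely diverge is in proving that identity. The paper observes (via Jensen) that the hypothesis implies the mean-expanding condition, invokes Lemma~\ref{expansion} (Birkhoff's ergodic theorem) to get $(\be_\om^{(n)})^{-1}\to0$ and hence the pointwise expansion $1=\sum_{n\ge1}d_n(\om,1)/\be_\om^{(n)}$ for $\mP$-a.e.\ $\om$, and then integrates over $\Om$. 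You instead telescope \emph{after} integrating: the identity $[\hbe(\om_{n+1})\tau_{\om_1\dots\om_n}(1)]=\hbe(\om_{n+1})\tau_{\om_1\dots\om_n}(1)-\tau_{\om_1\dots\om_{n+1}}(1)$ gives $c_n=a_n-a_{n+1}$ with $a_0=1$ and $a_{N+1}\le\rho^{N+1}\to0$, which needs only the integrated hypothesis $\rho<1$ and no ergodic theorem. Your route is more self-contained and slightly more elementary for this step (and your parenthetical shortcut---that $\cL$ preserves the Lebesgue integral, so integrating $\cL\phi=(\sum_nc_n-1)\1_{[0,1]}+\phi$ forces the coefficient to vanish---is also valid and even quicker); the paper's route has the side benefit of exhibiting the identity as the statement that the random $\be$-expansion of $1$ converges almost surely, which it reuses elsewhere. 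All the interchanges you flag (of $\cL$ with the sum and with the $\hOm^n$-integrals, and of the variation seminorm with the integral) are justified exactly as you say, by the uniform geometric bounds.
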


\begin{proof}
Since $\int_{\hat{\Om}}d\hat{\mP}(a)/\hat{\be}(a)<1$ and $||\1_{[0,a]}||_{BV}\leq2$ for $a\in[0,1]$, we have 
\begin{linenomath}
\begin{align*}
||\phi||_{BV}
&\leq2\Biggl(1+\sum_{n=1}^\infty \int_{\hOm^n}\frac{1}{\prod_{i=1}^n\hbe(\om_i)}\hmP^n(\om_1\dots\om_n)\Biggr) \\
&\leq2\Biggl(1+\sum_{n=1}^\infty\Biggl(\int_{\hOm}\frac{d \hmP(a)}{\hbe(a)}\Biggr)^n \Biggr)<+\infty.
\end{align*}
\end{linenomath}
Then $\phi\in BV$. 

We show that $\phi$ satisfies the equation $\cL\phi=\phi$. Since $\cL$
is linear and continuous, by Lemma \ref{key}, we have
\begin{linenomath}
\begin{align*}
\cL \phi 
&=\int_\Om \cL_\om\phi\ d\mP(\om) =\int_{\hOm} \cL_{\om_0}\phi\ d\hmP(\om_0) \\
&=\int_{\hOm}\cL_{\om_0}\Biggl(\1_{[0,1]}+\sum_{n=1}^\infty \int_{\hOm^n}\frac{\1_{[0,\tau_{\om_1\dots\om_n}(1)]}}{\prod_{i=1}^n\hbe(\om_i)}d\hmP^n(\om_1\dots\om_n)\Biggr) d\hmP(\om_0) \\
&=\int_{\hOm}\Biggl(\frac{\1_{[0,\tau_{\om_0}(1)]}+[\hbe(\om_0)]}{\hbe(\om_0)} \\
&+\sum_{n=1}^\infty\int_{\hOm^n}\frac{\1_{[0,\tau_{\om_1\dots\om_n\om_0}(1)]}+[\hbe(\om_0)\tau_{\om_1\dots\om_n}(1)]}{\hbe(\om_0)\prod_{i=1}^n\hbe(\om_i)}
d\hmP^n(\om_1\dots\om_n)
\Biggr) d\hmP(\om_0) \\
&=\Biggl(\int_{\hOm}\frac{[\hbe(\om_0)]}{\hbe(\om_0)} d\hmP(\om_0)
+\sum_{n=1}^\infty \int_{\hOm^{n+1}}\frac{[\hbe(\om_0)\tau_{\om_1\dots\om_n}(1)]}{\prod_{i=0}^n\hbe(\om_i)}d\hmP^n(\om_0\dots\om_n) \Biggr)\1_{[0,1]}\\
&+\sum_{n=1}^\infty \int_{\hOm^n}\frac{\1_{[0,\tau_{\om_1\dots\om_n}(1)]}(x)}{\prod_{i=1}^n\hbe(\om_i)}d\hmP^n(\om_1\dots\om_n).
\end{align*}
\end{linenomath}
Since 
\[\cL\phi-\phi=\Biggl(\int_{\hOm}\frac{[\hbe(\om_0)]}{\hbe(\om_0)} d\hmP(\om_0)
+\sum_{n=1}^\infty \int_{\hOm^{n+1}}\frac{[\hbe(\om_0)\tau_{\om_1\dots\om_n}(1)]}{\prod_{i=0}^n\hbe(\om_i)}d\hmP^n(\om_0\dots\om_n)-1\Biggr)\1_{[0,1]},\]
what we need to show is that 
\[\int_{\hOm}\frac{[\hbe(\om_0)]}{\hbe(\om_0)} d\hmP(\om_0)
+\sum_{n=1}^\infty \int_{\hOm^{n+1}}\frac{[\hbe(\om_0)\tau_{\om_1\dots\om_n}(1)]}{\prod_{i=0}^n\hbe(\om_i)}d\hmP^n(\om_0\dots\om_n)=1.\]

We note that, by the convexity of the function $f(x)=\log x$ for $x\in (0,+\infty)$, the assumption $\int_{\hat{\Om}}d\hat{\mP}(a)/\hat{\be}(a)<1$ implies the condition ($\ref{mean expanding}$) since
\[\int_{\Om} \log \frac{1}{\be(\om)}\mP(\om)\leq \log\Bigr(\int_\Om\frac{d\mP(\om)}{\be(\om)}\Bigl)<0.\]
Then by Lemma \ref{expansion}, we know that 
\[1=\sum_{n=1}^\infty\frac{d_n(\om,1)}{\be^{(n)}_\om}\]
for $\mP$-a.e. $\om\in\Om$. Hence
\begin{linenomath}
\begin{align*}
1&=\int_{\Om}\sum_{n=1}^\infty\frac{d_n(\om,1)}{\be^{(n)}_\om}d\mP(\om)
=\int_{\Om}\frac{[\be_\om]}{\be_\om}d \mP(\om)+\sum_{n=2}^\infty\int_{\Om}\frac{[\be_{\sh^{n-1}\om}\tau_{\om}^{n-1}(1)]}{\be^{(n)}_\om}d\mP(\om) \\
&=\int_{\hOm}\frac{[\hbe(\om_0)]}{\hbe(\om_0)} d\hmP(\om_0)
+\sum_{n=1}^\infty \int_{\hOm^{n+1}}\frac{[\hbe(\om_0)\tau_{\om_1\dots\om_n}(1)]}{\prod_{i=0}^n\hbe(\om_i)}d\hmP^{n}(\om_0\dots\om_n), 
\end{align*}
\end{linenomath}
which yields the conclusion.

\end{proof}

\begin{corollary}\label{cor1}
Under the same assumption as in Theorem \ref{Thm A}, 
we have
\[\sup_{x\in[0,1]}\hat{h}(x)=\lim_{x\searrow 0}\hat{h}(x)=\frac{1}{\Bigl(1-\int_\Om\frac{d\mP(\om)}{\be(\om)}\Bigr)\Bigl(1+\sum_{n=1}^\infty\int_\Om\frac{\tau_\om^{n}(1)}{\be^{(n)}_\om}d\mP\Bigr)}\]
and 
\[\inf_{x\in[0,1]}\hat{h}(x)=\lim_{x\nearrow 1}\hat{h}(x)=\frac{1}{1+\sum_{n=1}^\infty\int_\Om\frac{\tau_\om^{n}(1)}{\be^{(n)}_\om}d\mP},\]
where $\hat{h}$ denotes the density function of the stationary measure $\hat{\mu}$.
 
\end{corollary}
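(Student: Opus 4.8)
The plan is to deduce everything from the monotonicity of the explicit density produced in Theorem~\ref{Thm A}. First I would observe that $\phi$ is non-increasing on $[0,1]$: the leading term $\1_{[0,1]}$ is constant, and for every fixed sample $(\om_1,\dots,\om_n)$ the map $x\mapsto\1_{[0,\tau_{\om_1\dots\om_n}(1)]}(x)$ is a non-increasing step function; integrating these against the positive measure $d\hmP^n/\prod_{i=1}^n\hbe(\om_i)$ and summing (the series converges absolutely, as shown in the proof of Theorem~\ref{Thm A}) preserves monotonicity. Since $\hat h=\phi/\|\phi\|_1$ and $\|\phi\|_1>0$, the density $\hat h$ is non-increasing as well, so its supremum is its value at the left endpoint $x=0$ and its infimum is the limit as $x\nearrow1$. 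This reduces the corollary to evaluating $\phi(0)$, $\lim_{x\nearrow1}\phi(x)$ and $\|\phi\|_1$.

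For the supremum I would evaluate at $x=0$. Since $0\le\tau_{\om_1\dots\om_n}(1)$ for every sample, each indicator satisfies $\1_{[0,\tau_{\om_1\dots\om_n}(1)]}(0)=1$, and the geometric series already appearing in the proof of Theorem~\ref{Thm A} collapses to
\[
\phi(0)=1+\sum_{n=1}^\infty\Bigl(\int_{\hOm}\frac{d\hmP(a)}{\hbe(a)}\Bigr)^{n}=\frac{1}{1-\int_\Om\frac{d\mP(\om)}{\be(\om)}}.
\]
For the infimum I would use that $\tau_\al(y)=\al y-[\al y]\in[0,1)$ always, so $\tau_{\om_1\dots\om_n}(1)<1$ for every $n$ and every sample; hence $\1_{[0,\tau_{\om_1\dots\om_n}(1)]}(x)\to0$ as $x\nearrow1$ pointwise in the sample, and dominated convergence (with the summable bound from Theorem~\ref{Thm A}) leaves only the constant term, giving $\lim_{x\nearrow1}\phi(x)=1$.

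It remains to compute the normalizing constant. Integrating the series term by term, which is legitimate by the absolute convergence established in Theorem~\ref{Thm A}, and using $\int_0^1\1_{[0,a]}\,dl=a$, I get
\[
\|\phi\|_1=1+\sum_{n=1}^\infty\int_{\hOm^n}\frac{\tau_{\om_1\dots\om_n}(1)}{\prod_{i=1}^n\hbe(\om_i)}\,d\hmP^n=1+\sum_{n=1}^\infty\int_\Om\frac{\tau_\om^n(1)}{\be_\om^{(n)}}\,d\mP,
\]
where the final identity rewrites the $\hOm^n$-integral as an $\Om$-integral through the i.i.d. identifications $\be_\om^{(n)}=\prod_{i=1}^n\hbe(\om_i)$ and $\tau_\om^n=\tau_{\om_1\dots\om_n}$. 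Dividing $\phi(0)$ and $\lim_{x\nearrow1}\phi(x)=1$ by this value of $\|\phi\|_1$ produces the two displayed expressions for $\hat h$.

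The main point to watch is the justification of the endpoint values, i.e.\ interchanging the limit with the infinite sum and the $\hOm^n$-integrals. The summable majorant $\bigl(\prod_{i=1}^n\hbe(\om_i)\bigr)^{-1}$ coming from Theorem~\ref{Thm A} supplies dominated convergence at $x=1$; at $x=0$ one simply evaluates the non-increasing function at its left endpoint, where all indicators equal $1$, so no interchange of limits is actually needed there and the supremum is read off directly from $\phi(0)$.
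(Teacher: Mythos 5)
Your argument is correct and takes essentially the same route as the paper: monotonicity of $\phi$ in $x$, evaluation of the two endpoint limits (the geometric series at $0$ and the vanishing of all indicators as $x\nearrow 1$), and term-by-term integration to identify $\int\phi\,dl=1+\sum_{n=1}^\infty\int_\Om\tau_\om^{n}(1)/\be_\om^{(n)}\,d\mP$. The only difference is that you write out the computation of the normalizing constant explicitly, which the paper leaves implicit in the phrase ``together with Theorem \ref{Thm A}''.
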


\begin{proof}
Note that the indicator function $\1_I(x)$ for an open interval $I$ including $0$ is non-increasing for $x\in[0,1]$. Hence
\[\phi(x)=\1_{[0,1]}(x)+\sum_{n=1}^\infty \int_{\hOm^n}\frac{\1_{[0,\tau_{\om_1\dots\om_n}(1)]}(x)}{\prod_{i=1}^n\hbe(\om_i)}d\hmP^n(\om_1\dots\om_n), \]
it is also non-increasing for $x\in[0,1]$. Then we have
\begin{linenomath}
\begin{align*}
\sup_{x\in[0,1]}\phi(x)
&=\lim_{x\searrow 0}\phi(x) \\
&=1+\sum_{n=1}^\infty\int_{\hOm^n}\frac{d\hmP^n(\om_1\dots\om_n)}{\prod_{i=1}^n\hbe(\om_i)} \\
&=1+\sum_{n=1}^\infty\Biggl(\int_{\hOm}\frac{d\hmP(a)}{\hbe(a)}\Biggr)^n \\
&=\frac{1}{1-\int_\Om\frac{d\mP(\om)}{\be(\om)}}
\end{align*}
\end{linenomath}
and 
\begin{linenomath}
\begin{align*}
\inf_{x\in[0,1]}\phi(x)
&=\lim_{x\nearrow 1}\phi(x) \\
&=1+\sum_{n=1}^\infty\int_{\hOm^n}\frac{\lim_{x\nearrow1}\1_{[0,\tau_{\om_1\dots\om_n}(1)]}(x)}{\prod_{i=1}^n\hbe(\om_i)}d\hmP^n(\om_1\dots\om_n) \\
&=1.
\end{align*}
\end{linenomath}
Together with Theorem \ref{Thm A}, we have the conclusion.
\end{proof}

\end{section}

\begin{section}{Linear response formula for random dynamics generated by finitely many beta-transformations}

In this section, we investigate differentiability of the density function of the stationary measure $\hat{\mu}$ when the probability $\mP$ varies. Such a problem, called linear response in the context of thermodynamic formalism, has been investigated for i.i.d random dynamical systems generated by fully branched expanding maps in \cite{Ba-Ru} (see \cite{Ga-Se} and \cite{Se-Ru} for samplewise linear response).

Under the same assumptions as in the previous section, we have the explicit formula for the density function of the stationary measure $\hat{\mu}$ given by Theorem \ref{Thm A}. As its application, we give a linear response formula for the density function explicitly when a random dynamics is generated by finitely many beta-transformations and $\mP$ is a Bernoulli measure. In detail, we assume that there is a probability space $(\Om_0,\cF_0,\mP_0)$ such that $(\Om,\cF,\mP)=(\Om_0,\cF_0,\mP_0)^{\N}$, where 
$\Om_0$ is a finite set $\{0,\dots,N-1\}$, $\cF_0$ is the set of all subset of $\Om_0$ and 
$\mP_0$ is a probability measure on the finite state $\Om_0$ given by $\mP_0(\{i\})=p_i$ for $0\leq i\leq N-1$ with $p_i>0$ and $p_0+\dots+p_{N-1}=1$.
Note that the random variable $\hbe$ takes only at most $N$-values and we set 
$\be_i=\hbe(i)$ for $0\leq i\leq N-1$. In this setting, the explicit formula for the density function of $\hat{\mu}$ given in Theorem \ref{Thm A} becomes simpler as follows. 

\begin{theorem} 
Let $N\geq2$ and let $(\Om,\cF,\mP)=(\Om_0,\cF_0,\mP_0)^{\N}$ be a one-sided product space defined as above. Under the assumption that 
\[\sum_{i=0}^{N-1}p_i/\be_i<1,\]
the function $\phi$ defined as in Theorem \ref{Thm A} is given by
\[\phi(x)=\1_{[0,1]}(x)+\sum_{n=1}^\infty\sum_{\om_1\dots\om_n\in\Om_0^n}\prod_{i=0}^{N-1}\Biggl(\frac{p_i}{\be_i}\Biggr)^{W_i(\om_1\dots\om_n)}\1_{[0,\tau_{\om_1\dots\om_n}(1)]},
\]
where $W_i(\om_1\dots\om_n)$ denotes the number of the set $\{1\leq j\leq n\ ;\ \om_j=i\}$ for $0\leq i\leq N-1$. 
\end{theorem}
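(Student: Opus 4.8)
The plan is to start from the explicit formula for $\phi$ provided by Theorem \ref{Thm A},
\[
\phi(x)=\1_{[0,1]}(x)+\sum_{n=1}^\infty \int_{\hOm^n}\frac{\1_{[0,\tau_{\om_1\dots\om_n}(1)]}(x)}{\prod_{i=1}^n\hbe(\om_i)}\,d\hmP^n(\om_1\dots\om_n),
\]
and simply specialize the noise space to the finite-state Bernoulli case $(\Om,\cF,\mP)=(\Om_0,\cF_0,\mP_0)^{\N}$ with $\Om_0=\{0,\dots,N-1\}$. First I would check that the hypothesis of Theorem \ref{Thm A} is met: in this setting $\int_{\hOm}d\hmP(a)/\hbe(a)=\sum_{i=0}^{N-1}p_i/\be_i$, which is precisely the quantity assumed to be less than $1$, so $\phi\in BV$ and $\cL\phi=\phi$ by the earlier theorem, and nothing about convergence needs to be re-proved.

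The core of the argument is purely combinatorial. Because $\hmP$ is a discrete (counting-with-weights) measure on the finite set $\Om_0$, the product measure $\hmP^n$ on $\Om_0^n$ is also discrete, and integration against $d\hmP^n$ collapses into a finite sum over words $\om_1\dots\om_n\in\Om_0^n$. Concretely, I would replace
\[
\int_{\hOm^n}(\,\cdot\,)\,d\hmP^n(\om_1\dots\om_n)
\quad\text{by}\quad
\sum_{\om_1\dots\om_n\in\Om_0^n}\Bigl(\prod_{j=1}^n p_{\om_j}\Bigr)(\,\cdot\,),
\]
since each word $\om_1\dots\om_n$ carries weight $\prod_{j=1}^n\hmP_0(\{\om_j\})=\prod_{j=1}^n p_{\om_j}$. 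Substituting this into the formula and combining the weight $\prod_{j=1}^n p_{\om_j}$ with the denominator $\prod_{j=1}^n\hbe(\om_j)=\prod_{j=1}^n\be_{\om_j}$ gives the coefficient $\prod_{j=1}^n(p_{\om_j}/\be_{\om_j})$ attached to each summand.

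The only remaining point is to re-index the per-word product over positions $j$ as a product over symbols $i$. For a fixed word $\om_1\dots\om_n$, each symbol $i\in\{0,\dots,N-1\}$ appears exactly $W_i(\om_1\dots\om_n)=\#\{1\le j\le n:\om_j=i\}$ times, so
\[
\prod_{j=1}^n\frac{p_{\om_j}}{\be_{\om_j}}=\prod_{i=0}^{N-1}\Bigl(\frac{p_i}{\be_i}\Bigr)^{W_i(\om_1\dots\om_n)},
\]
which turns the formula into exactly the claimed expression. I do not expect any genuine obstacle here: the statement is a direct rewriting of Theorem \ref{Thm A} under a discrete measure, and the main (minor) care is purely bookkeeping — keeping the factor $\tau_{\om_1\dots\om_n}(1)$ intact under the relabeling and confirming that $\sum W_i=n$ guarantees the two product forms agree term by term. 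The whole proof is essentially the substitution and re-indexing just described, together with a one-line verification that the convergence hypothesis transcribes to $\sum_i p_i/\be_i<1$.
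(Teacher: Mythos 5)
Your proposal is correct and follows essentially the same route as the paper: it invokes Theorem \ref{Thm A}, replaces the integral over $\Om_0^n$ by the weighted finite sum $\sum_{\om_1\dots\om_n}\prod_j p_{\om_j}$, and regroups $\prod_j(p_{\om_j}/\be_{\om_j})$ into $\prod_i(p_i/\be_i)^{W_i}$. No gaps; this is exactly the paper's one-step computation.
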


\begin{proof}
The statement follows from the fact that 
\begin{linenomath}
\begin{align*}
&\int_{\Om_0^n}\frac{\1_{[0,\tau_{\om_1\dots\om_n}(1)]}}{\hbe(\om_1)\dots\hbe(\om_n)}d\hmP^n(\om_1\dots\om_n) \\
&=\sum_{v_1\dots v_n  \in\Om_0^n} \frac{\hmP^n(\{v_1\dots v_n\})}{\be_0^{W_0(v_1\dots v_n)}\cdots\be_{N-1}^{W_{N-1}(v_1\dots v_n)}}\1_{[0,\tau_{v_1\dots v_n}(1)]} \\
&=\sum_{v_1\dots v_n  \in\Om_0^n} \frac{p_0^{W_0(v_1\dots v_n)}\cdots p_{N-1}^{W_{N-1}(v_1\dots v_n)}}{\be_0^{W_0(v_1\dots v_n)}\cdots\be_{N-1}^{W_{N-1}(v_1\dots v_n)}}\1_{[0,\tau_{v_1\dots v_n}(1)]}
\end{align*}
\end{linenomath}
for $n\geq1$.
\end{proof}

In the rest of this section we consider the case of $N=2$ for simplicity. We assume that $\be_1>1$ and $\be_0\leq\be_1$. 
Note that if $\be_0\geq1$ then $p/\be_1+(1-p)/\be_0<1$ for all $p\in[0,1]$. In that case, we set $p_c=0$ and if $\be_0<1$
we set $p_c\in(0,1)$ as the unique solution of the equation
\[\frac{p}{\be_1}+\frac{1-p}{\be_0}=1\]
for $p\in[0,1]$. By putting $p=p_1\in(p_c,1)$, the function 
$\phi$ defined in Theorem \ref{Thm A} is given by
\[\phi(p)=\1_{[0,1]}+\sum_{n=1}^\infty\sum_{\om_1\dots\om_n\in\{0,1\}^n}\Biggl(\frac{p}{\be_1}\Biggr)^{\sum_{i=1}^n\om_i}\Biggl(\frac{1-p}{\be_0}\Biggr)^{n-\sum_{i=1}^n\om_i}\1_{[0,\tau_{\om_1\dots\om_n}(1)]}.\]
The main theorem of this section is the following:

\begin{theorem}\label{Thm B}

(1) The function $p\in(p_c,1)\mapsto \phi(p)\in BV$ is analytic and its derivative is given by
\begin{linenomath}
\begin{align*}
\frac{\partial \phi(p)}{\partial p}
&=\sum_{n=1}^\infty\sum_{\substack{\om_1\dots\om_n\in\{0,1\}^n \\ \om_1\dots\om_n\neq 0^n,1^n}}
p^{\sum_{i=1}^n\om_i-1}(1-p)^{n-\sum_{i=1}^n\om_i-1}
\Bigl(\sum_{i=1}^n\om_i-np\Bigr)
\frac{\1_{[0,\tau_{\om_1\dots\om_n}(1)]}}{\be_{\om_1\dots\om_n}} \\
&+\sum_{n=1}^\infty n p^{n-1}\frac{\1_{[0,T_{\be_1}^n(1)]}}{\be_1^n}-\sum_{n=1}^\infty n (1-p)^{n-1}\frac{\1_{[0,T_{\be_0}^n(1)]}}{\be_0^n}.
\end{align*}
\end{linenomath}

(2) The function $p\in(p_c,1)\mapsto \int\phi_p dl\in (0,+\infty)$ is analytic and its derivative is given by
\begin{linenomath}
\begin{align*}
\frac{\partial \int\phi(p) dl}{\partial p}
&=\sum_{n=1}^\infty\sum_{\substack{\om_1\dots\om_n\in\{0,1\}^n \\ \om_1\dots\om_n\neq 0^n,1^n}}
p^{\sum_{i=1}^n\om_i-1}(1-p)^{n-\sum_{i=1}^n\om_i-1}
\Bigl(\sum_{i=1}^n\om_i-np\Bigr)
\frac{\tau_{\om_1\dots\om_n}(1)}{\be_{\om_1\dots\om_n}} \\
&+\sum_{n=1}^\infty n p^{n-1}\frac{T_{\be_1}^n(1)}{\be_1^n}-\sum_{n=1}^\infty n (1-p)^{n-1}\frac{T_{\be_0}^n(1)}{\be_0^n}.
\end{align*}
\end{linenomath}

(3) Let $h(p)=\phi(p)/\int\phi(p)dl$. Then the function $p\in(p_c,1)\mapsto h(p) \in BV$ is analytic and its derivative is given by
\[\frac{\partial h(p)}{\partial p}=\frac{\frac{\partial \phi(p)}{\partial p}\int\phi(p) dl-\phi(p)\frac{\partial \int\phi(p) dl}{\partial p}}{\Bigl(\int\phi(p) dl\Bigr)^2},\]
where all $\phi(p)$, $\displaystyle{\int\phi(p) dl}$, $\displaystyle{\frac{\partial \phi(p)}{\partial p}}$ and $\displaystyle{\frac{\partial \int\phi(p) dl}{\partial p}}$ are given explicitly.

\end{theorem}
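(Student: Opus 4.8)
The plan is to reduce all three assertions to a single analyticity statement for the $BV$-valued map $p\mapsto\phi(p)$, and then obtain (2) and (3) by elementary functional-analytic principles. Concretely, I would first show that $\phi$ extends to a holomorphic $BV$-valued function on a complex neighbourhood of the interval $(p_c,1)$; real-analyticity on $(p_c,1)$ is then immediate. Granting this, part (2) follows because $L\colon f\mapsto\int f\,dl$ is a bounded linear functional on $BV$ (indeed $|\int f\,dl|\le\|f\|_1\le\|f\|_{BV}$), so $L\circ\phi$ is scalar analytic with $(L\circ\phi)'=L(\phi')$, and $\int\1_{[0,a]}\,dl=a$ turns the derivative formula of (1) into that of (2); note also $\int\phi(p)\,dl\ge\int\1_{[0,1]}\,dl=1>0$, so the image lies in $(0,+\infty)$. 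For part (3), since the scalar analytic function $p\mapsto\int\phi(p)\,dl$ never vanishes on $(p_c,1)$, its reciprocal is analytic, and $h(p)=(\int\phi(p)\,dl)^{-1}\phi(p)$ is a product of a scalar analytic function with a $BV$-valued analytic function; the quotient rule for Banach-valued maps gives exactly the stated formula.

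For the analyticity of $\phi$ I would argue as follows. Each partial sum $\phi_M(p)=\1_{[0,1]}+\sum_{n=1}^{M}\sum_{\om_1\dots\om_n}(p/\be_1)^{k}((1-p)/\be_0)^{n-k}\1_{[0,\tau_{\om_1\dots\om_n}(1)]}$, where $k=\sum_i\om_i$, is a $BV$-valued polynomial in $p$, hence entire. Using $\|\1_{[0,a]}\|_{BV}\le2$ and the multinomial identity $\sum_{\om_1\dots\om_n}(|p|/\be_1)^{k}(|1-p|/\be_0)^{n-k}=(|p|/\be_1+|1-p|/\be_0)^{n}$, the tail of the series is bounded in $BV$ by $2\sum_{n>M}(|p|/\be_1+|1-p|/\be_0)^{n}$. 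For real $p\in(p_c,1)$ the definition of $p_c$ gives $p/\be_1+(1-p)/\be_0<1$, and by continuity $|p|/\be_1+|1-p|/\be_0<1$ persists on a complex neighbourhood of each point of $(p_c,1)$; thus $\phi_M\to\phi$ uniformly in $\|\cdot\|_{BV}$ on compact subsets of such a neighbourhood, and the Banach-valued Weierstrass theorem (Cauchy's integral formula) yields holomorphy of $\phi$. The same theorem gives $\phi_M'\to\phi'$ locally uniformly, so $\phi'$ is the term-by-term derivative, provided that series also converges locally uniformly in $BV$.

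The derivative of a single coefficient is $\frac{d}{dp}[(p/\be_1)^{k}((1-p)/\be_0)^{n-k}]=\be_1^{-k}\be_0^{-(n-k)}p^{k-1}(1-p)^{n-k-1}(k-np)$, and since $\be_1^{k}\be_0^{n-k}=\be_{\om_1\dots\om_n}$ this is precisely the coefficient appearing in (1). For the interior words $1\le k\le n-1$ the exponents $k-1$ and $n-k-1$ are nonnegative, so on a compact subset of the complex neighbourhood where $|p|/\be_1+|1-p|/\be_0\le\rho<1$ and $|p|,|1-p|\ge c>0$ one bounds the level-$n$ contribution in $BV$ by $2c^{-2}\,n\,\rho^{n}$ using $|k-np|\le n$; summing over $n$ converges. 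The boundary words $0^{n}$ and $1^{n}$ must be separated out, because there the naive formula would carry a negative exponent: their derivatives are $n p^{n-1}\be_1^{-n}\1_{[0,T_{\be_1}^{n}(1)]}$ and $-n(1-p)^{n-1}\be_0^{-n}\1_{[0,T_{\be_0}^{n}(1)]}$ respectively (using $\tau_{1^n}(1)=T_{\be_1}^n(1)$ and $\tau_{0^n}(1)=T_{\be_0}^n(1)$), and their series converge since $|p|/\be_1<1$ and $|1-p|/\be_0<1$. Collecting the interior and boundary contributions reproduces the formula of part (1) exactly.

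I expect the main obstacle to be the analyticity step in the $BV$-norm rather than the algebra of the derivative: one must verify that the convergence condition $|p|/\be_1+|1-p|/\be_0<1$ genuinely survives on a full complex neighbourhood of $(p_c,1)$ (so that $\phi$ has a positive radius of convergence as a $BV$-valued function), and that the differentiated series retains locally uniform convergence despite the linear-in-$n$ factor $(k-np)$ and the loss of one power each of $p$ and of $1-p$. Once these uniform estimates are in place, the passage from (1) to (2) and (3) is purely formal.
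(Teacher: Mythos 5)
Your proposal is correct, and the overall strategy coincides with the paper's: extend $\phi$ to a $BV$-valued power series on a complex neighbourhood of $(p_c,1)$ using $\|\1_{[0,a]}\|_{BV}\le 2$ and the multinomial bound $\sum_{\om_1\dots\om_n}(|z|/\be_1)^{k}(|1-z|/\be_0)^{n-k}=(|z|/\be_1+|1-z|/\be_0)^{n}<1$, differentiate term by term, and separate the boundary words $0^n,1^n$ (whose coefficient derivatives lose a power of $1-p$ or $p$) from the interior words. Where you differ is in the justification of term-by-term differentiation: the paper bounds the difference quotients directly (the quantities $\delta_1(n),\delta_2(n)\le 4n\delta^{n-1}$), tests against an arbitrary $f^*\in BV^*$, and passes to the limit by dominated convergence in the resulting scalar series, implicitly using that weak analyticity gives the norm derivative; you instead invoke the Banach-valued Weierstrass/Cauchy-integral theorem for the locally uniform limit of the polynomial partial sums, which delivers holomorphy and convergence of the derivatives in one stroke and avoids the duality step. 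Likewise for part (2) the paper reruns a dominated-convergence argument using the uniform bound $8(1-\delta)^{-2}$ on the difference quotients, whereas you simply compose with the bounded functional $f\mapsto\int f\,dl$ on $BV$, which is shorter and makes the passage from (1) to (2) purely formal. Both routes are sound; the paper's is more self-contained and elementary, yours is cleaner and generalizes immediately to any bounded functional or operator applied to $\phi$. One small point to make explicit in your write-up: the positivity $\int\phi(p)\,dl\ge 1$ needed for (3) holds because every summand is nonnegative for real $p\in(0,1)$ and the leading term contributes $1$, which you do note.
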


\begin{proof}
Let $p_0\in(p_c,1)$ and set $I=[p_0,1]$. Define the formal power series $\rho(z)$ by
\[\rho(z)=\1_{[0,1]}+\sum_{n=1}^\infty\sum_{\om_1\dots\om_n\in\{0,1\}^n}\Biggl(\frac{z}{\be_1}\Biggr)^{\sum_{i=1}^n\om_i}\Biggl(\frac{1-z}{\be_0}\Biggr)^{n-\sum_{i=1}^n\om_i}\1_{[0,\tau_{\om_1\dots\om_n}(1)]}.\]
Note that we formally have $\rho(p)=\phi(p)$ for $p\in I$.

(1) To show the analyticity of the function $p\mapsto\phi(p)$ at $p_0$, we see that the formal power series $\rho(z)$ absolutely converges in some neighborhood 
$ D\subset \C$ of $I$. Note that for any $n\geq1$ and $\om_1\dots\om_n\in\{0,1\}^n$, it holds that 
\[||\1_{[0, \tau_{\om_1\dots\om_n}(1)]}||_{BV}\leq1+\tau_{\om_1\dots\om_n}(1)\leq2.\] 
Hence
\[\begin{split}
||\phi(z)||_{BV}
&\leq2+2\sum_{n=1}^\infty\sum_{\om_1\dots\om_n\in\{0,1\}^n}\Bigl|\frac{z}{\be_1}\Bigr|^{\sum_{i=1}^n\om_i}\Bigl|\frac{1-z}{\be_0}\Bigr|^{n-\sum_{i=1}^{n}\om_i} \\
&\leq2+2\sum_{n=1}^\infty\Biggl(\Bigl|\frac{z}{\be_1}\Bigr|+\Bigl|\frac{1-z}{\be_0}\Bigr|\Biggr)^n \\ 
&=2+\frac{2}{1-\Bigl(\frac{|z|}{\be_1}+\frac{|1-z|}{\be_0}\Bigr)}<+\infty
\end{split}
\]
whenever $z\in\C$ satisfies $|z|/\be_1+|1-z|/\be_0<1$. 
Since $I$ is included in the open region $\{z\in\C\ ;\ |z|/\be_1+|1-z|/\be_0<1\}$, we can take an open neighborhood $D$ of $I$ such that $\phi(z)$ converges uniformly on $D\subset\C$, which yields the analyticity of the function $p\mapsto \phi(p)$. 

For $p\in(p_c,1)$, set $\del=(p+|\ep|)/\be_1+(1-p+|\ep|)/\be_0$, where $\ep$ is taken so small that $\del<1$. 
Since
\begin{linenomath}
\begin{align*}
&\Biggl|\frac{1}{\ep}\Biggl\{\Bigl(\frac{p+\ep}{\be_1}\Bigr)^k\Bigl(\frac{1-(p+\ep)}{\be_0}\Bigr)^{n-k}
-\Bigl(\frac{p}{\be_1}\Bigr)^k\Bigl(\frac{1-p}{\be_0}\Bigr)^{n-k}\Biggr\} \Biggr| \\
&\leq \Biggl|\frac{(p+\ep)^k-p^k}{\ep\be_1^k}\Biggr|\Biggl| \frac{1-(p+\ep)}{\be_0}\Biggr|^{n-k}
+\Bigl(\frac{p}{\be_1}\Bigr)^k\Biggl|\frac{(1-(p+\ep))^{n-k}-(1-p)^{n-k}}{\ep\be_0^{n-k}}\Biggr|  \\
&\leq k\cdot\frac{(p+|\ep|)^{k-1}}{\be_1^{k-1}}\cdot\Biggl| \frac{1-p+|\ep|}{\be_0}\Biggr|^{n-k}
+(n-k)\cdot\Bigl(\frac{p}{\be_1}\Bigr)^k\cdot\Bigl(\frac{1-p+|\ep|}{\be_0}\Bigr)^{n-k-1}
\end{align*}
\end{linenomath}
for $n\geq1$ and $1\leq k <n$, we have
\begin{linenomath}
\begin{align*}
&\delta_1(n):=\Biggl|\Biggl|\frac{1}{\ep}\sum_{\substack{\om_1\dots\om_n\in\{0,1\}^n \\ \om_1\dots\om_n\neq 0^n,1^n}}  
\Biggl\{\Bigl(\frac{p+\ep}{\be_1}\Bigr)^{\sum_{i=1}^n\om_i}\Bigl(\frac{1-(p+\ep)}{\be_0}\Bigr)^{n-\sum_{i=1}^n\om_i} \\
&-\Bigl(\frac{p}{\be_1}\Bigr)^{\sum_{i=1}^n\om_i}\Bigl(\frac{1-p}{\be_0}\Bigr)^{n-\sum_{i=1}^n\om_i}\Biggr\}\1_{[0, \tau_{\om_1\dots\om_n}(1)]}\Biggr|\Biggr|_{BV}\\
&\leq2\sum_{\substack{\om_1\dots\om_n\in\{0,1\}^n \\ \om_1\dots\om_n\neq 0^n,1^n}}  
\Biggl\{\sum_{i=1}^n\om_i\Bigl(\frac{p+|\ep|}{\be_1}\Bigr)^{\sum_{i=1}^n\om_i-1}\Bigl(\frac{1-p+|\ep|}{\be_0}\Bigr)^{n-\sum_{i=1}^n\om_i} \\
&+(n-\sum_{i=1}^n\om_i)\Bigl(\frac{p+|\ep|}{\be_1}\Bigr)^k\cdot\Bigl(\frac{1-p+|\ep|}{\be_0}\Bigr)^{n-\sum_{i=1}^n\om_i-1} \Biggr\}\\
&\leq 2\sum_{k=1}^{n-1}\Biggl\{\binom{n}{k}k\cdot\frac{(p+|\ep|)^{k-1}}{\be_1^{k-1}}\cdot\Biggl| \frac{1-p+|\ep|}{\be_0}\Biggr|^{n-k} \\
&+\binom{n}{n-k}(n-k)\cdot\Bigl(\frac{p+|\ep|}{\be_1}\Bigr)^k\cdot\Bigl(\frac{1-p+|\ep|}{\be_0}\Bigr)^{n-k-1}\Biggr\} \\
&\leq 4n\del^{n-1}.
\end{align*}
\end{linenomath}
In addition, 
\begin{linenomath}
\begin{align*}
&\delta_2(n):=\Biggl|\Biggl|\frac{1}{\ep}\Bigl\{\Bigl(\frac{p+\ep}{\be_1}\Bigr)^n-\Bigl(\frac{p}{\be_1}\Bigr)^n\Bigr\}\1_{[0,T_{\be_1}^n(1)]}\Biggr|\Biggr|_{BV} \\
&+\Biggl|\Biggl|\frac{1}{\ep}\Bigl\{\Bigl(\frac{1-(p+\ep)}{\be_0}\Bigr)^n-\Bigl(\frac{1-p}{\be_0}\Bigr)^n\Bigr\}\1_{[0,T_{\be_0}^n(1)]}\Biggr|\Biggr|_{BV} \\
&\leq 4n\del^{n-1}
\end{align*}
\end{linenomath}
for $n\geq1$.

Let us denote by $(BV^*, ||\cdot||_{BV^*})$ the Banach space of bounded linear functionals on $BV$. By using the above inequalities for $f^*\in BV^*$ we have
\begin{linenomath}
\begin{align*} 
&\frac{1}{||f^*||_{BV^*}}\Biggl|\frac{f^*(\phi(p+\ep))-f^*(\phi(p))}{\ep} \Biggr|\\
&\leq\Biggl|\Biggl|\sum_{n=1}^\infty\sum_{\om_1\dots\om_n\in\{0,1\}^n}\Biggl\{\Biggl(\frac{p+\ep}{\be_1}\Biggr)^{\sum_{i=1}^n\om_i}\Biggl(\frac{1-(p+\ep)}{\be_0}\Biggr)^{n-\sum_{i=1}^n\om_i} \\
&-\Biggl(\frac{p}{\be_1}\Biggr)^{\sum_{i=1}^n\om_i}\Biggl(\frac{1-p}{\be_0}\Biggr)^{n-\sum_{i=1}^n\om_i}\Biggr\}\1_{[0,\tau_{\om_1\dots\om_n}(1)]}\Biggr|\Biggr|_{BV}  \\
&\leq\sum_{n=1}^\infty\Biggl|\Biggl|\frac{1}{\ep}\sum_{\substack{\om_1\dots\om_n\in\{0,1\}^n \\ \om_1\dots\om_n\neq 0^n,1^n}} \Biggl\{\Bigl(\frac{p+\ep}{\be_1}\Bigr)^{\sum_{i=1}^n\om_i}\Bigl(\frac{1-(p+\ep)}{\be_0}\Bigr)^{n-\sum_{i=1}^n\om_i} \\
&-\Bigl(\frac{p}{\be_1}\Bigr)^{\sum_{i=1}^n\om_i}\Bigl(\frac{1-p)}{\be_0}\Bigr)^{n-\sum_{i=1}^n\om_i}\Biggr\}\1_{[0, \tau_{\om_1\dots\om_n}(1)}\Biggr|\Biggr|_{BV}\\
&\leq\sum_{n=1}^\infty(\del_1(n)+\del_2(n)) \\
&\leq \sum_{n=1}^\infty 8n\del^{n-1}=\frac{8}{(1-\del)^2}<+\infty.
\end{align*}
\end{linenomath}
Note that by continuity of $f^*$ it holds that
\begin{linenomath}
\begin{align*}
&f^*(\phi(p)) \\
&=f^*(\1_{[0,1]})+\sum_{n=1}^\infty\sum_{\om_1\dots\om_n\in\{0,1\}^n}\Biggl(\frac{p}{\be_1}\Biggr)^{\sum_{i=1}^n\om_i}\Biggl(\frac{1-p}{\be_0}\Biggr)^{n-\sum_{i=1}^n\om_i}f^*(\1_{[0,\tau_{\om_1\dots\om_n}(1)]}).
\end{align*}
\end{linenomath}
Since for every $\om_1\dots\om_n\in\{0,1\}^n$ the function
\[p\in(p_c,1)\mapsto\Biggl(\frac{p}{\be_1}\Biggr)^{\sum_{i=1}^n\om_i}\Biggl(\frac{1-p}{\be_0}\Biggr)^{n-\sum_{i=1}^n\om_i}f^*(\1_{[0,\tau_{\om_1\dots\om_n}(1)]})\in\C\]
is analytic, together with the Lebesgue convergence theorem, we have
\begin{linenomath}
\begin{align*}
&\lim_{\ep\to0}\frac{f^*(p+\ep)-f^*(p)}{\ep} \\
&=\sum_{n=1}^\infty\sum_{\om_1\dots\om_n\in\{0,1\}^n}\frac{\partial}{\partial p}\Biggl(\frac{p}{\be_1}\Biggr)^{\sum_{i=1}^n\om_i}\Biggl(\frac{1-p}{\be_0}\Biggr)^{n-\sum_{i=1}^n\om_i}f^*(\1_{[0,\tau_{\om_1\dots\om_n}(1)]}) \\
&=
\sum_{n=1}^\infty\sum_{\substack{\om_1\dots\om_n\in\{0,1\}^n \\ \om_1\dots\om_n\neq 0^n,1^n}}\frac{\partial}{\partial p}\Biggl(\frac{p}{\be_1}\Biggr)^{\sum_{i=1}^n\om_i}\Biggl(\frac{1-p}{\be_0}\Biggr)^{n-\sum_{i=1}^n\om_i}f^*(\1_{[0,\tau_{\om_1\dots\om_n}(1)]}) \\
&+\sum_{n=1}^\infty \frac{\partial}{\partial p}p^n\frac{f^*(\1_{[0,T_{\be_1}^n(1)]})}{\be_1^n}+\sum_{n=1}^\infty \frac{\partial}{\partial p}(1-p)^n\frac{f^*(\1_{[0,T_{\be_0}^n(1)]})}{\be_0^n} \\
&=\sum_{n=1}^\infty\sum_{\substack{\om_1\dots\om_n\in\{0,1\}^n \\ \om_1\dots\om_n\neq 0^n,1^n}}
p^{\sum_{i=1}^n\om_i-1}(1-p)^{n-\sum_{i=1}^n\om_i-1}
\Bigl(\sum_{i=1}^n\om_i-np\Bigr)
\frac{f^*(\1_{[0,\tau_{\om_1\dots\om_n}(1)]})}{\be_{\om_1\dots\om_n}} \\
&+\sum_{n=1}^\infty n p^{n-1}\frac{f^*(\1_{[0,T_{\be_1}^n(1)]})}{\be_1^n}-\sum_{n=1}^\infty n (1-p)^{n-1}\frac{f^*(\1_{[0,T_{\be_0}^n(1)]})}{\be_0^n}.
\end{align*}
\end{linenomath}
Since $f^*$ is any linear bounded functional on $BV$, we obtain the conclusion.

(2) In the proof of (1) we see that for $p\in (p_c,1]$ and $\ep\in\R\setminus\{0\}$ whose modulus is sufficient small, the value $|| (\phi(p+\ep)-\phi(p))/\ep||_{BV}$ is bounded above by $8(1-\del)^{-2}$, which is independent of $\ep$. Since $|| (\phi(p+\ep)-\phi(p))/\ep||_{1}\leq|| (\phi(p+\ep)-\phi(p))/\ep||_{BV}$, by applying the Lebesgue convergence theorem, we obtain
\begin{linenomath}
\begin{align*}
\frac{\partial}{\partial p}\int\phi(p) dl=\int\frac{\partial}{\partial p}\phi(p) dl.
\end{align*}
\end{linenomath}
Note that the quantity $\del_1(n)$ and $\del_2(n)$ is summable for $n\geq1$ as we see in the proof of (1). Hence 
\begin{linenomath}
\begin{align*}
&\int\frac{\partial}{\partial p}\phi(p) dl \\
&=\sum_{n=1}^\infty\sum_{\substack{\om_1\dots\om_n\in\{0,1\}^n \\ \om_1\dots\om_n\neq 0^n,1^n}}
p^{\sum_{i=1}^n\om_i-1}(1-p)^{n-\sum_{i=1}^n\om_i-1}
\Bigl(\sum_{i=1}^n\om_i-np\Bigr)
\frac{\int\1_{[0,\tau_{\om_1\dots\om_n}(1)]}dl}{\be_{\om_1\dots\om_n}} \\
&+\sum_{n=1}^\infty n p^{n-1}\frac{\int\1_{[0,T_{\be_1}^n(1)]}dl}{\be_1^n}-\sum_{n=1}^\infty n (1-p)^{n-1}\frac{\int\1_{[0,T_{\be_0}^n(1)]}dl}{\be_0^n},
\end{align*}
\end{linenomath}
which yields the conclusion.

(3) immediately follows from (1) and (2). 
\end{proof}

\end{section}

\begin{section}{Invariant densities for non-i.i.d. cases}
In this section we consider non-i.i.d. random dynamical systems of beta transformations. 
We assume that the noise dynamics $\sh:\Om\to\Om$ is ergodic and invertible, and the measurable function $\be:\Om\to(0,+\infty)$ satisfies $\be(\om)\in(1,+\infty)$ for $\mP$-a.e. $\om\in\Om$. A key ingredient to construct a unique $R$-invariant probability measure $\mu$ absolutely continuous with respect to $\mP\times l$ is that for $h\in L^1(\mP\times l)$ with $h\geq0$ we have that $\cL_\om h_\om=h_{\sh\om}$ for $\mP$-a.e. $\om\in\Om$, where $h_\om=h(\om, \cdot)$ for $\om$, if and only if the finite positive measure given by
\[m_h(A\times J):=\int_A\int_J h(\om,x) d(\mP\times l)\ \ (A\times J\in \cF\times\cB)\]
is $R$-invariant (see \cite{Bu}). As stated in Example 0.4 in \cite{Bu}, by applying Theorem 0.3 in \cite{Bu} to our setting, 
we can see that there is a unique $R$-invariant probability measure $\mu$ absolutely continuous with respect to $\mP\times l$. 
We construct a function $\vp\in L^1(\mP\times l)$ satisfying $\cL \vp_\om=\vp_{\sh\om}$ for $\mP$-a.e. $\om\in\Om$ in the following two cases. 

\begin{subsection}{Strongly expanding cases}
First we give an explicit formula for the density function of $\mu$ in the case where  the maps randomly chosen are strongly expanding in the sense that ${\displaystyle{\rm essinf}_{\om\in\Om} \be(\om)>2}$. 
\begin{theorem}\label{Thm C}
Under the assumption ${\displaystyle{\rm essinf}_{\om\in\Om} \be(\om)>2}$, the measurable function $c:\Om\to\R$ given by
\[c(\om)=\sum_{n=0}^\infty (-1)^n c_n(\om)\]
for $\om\in\Om$, where $c_0(\om)=1$,
\[c_1(\om)=\sum_{i_1=1}^\infty\frac{\tau_{\sh^{-i_1}\om}^{i_1}(1)}{\be_{\sh^{-i_1}\om}^{(i_1)}}\]
and
\[c_n(\om)=\sum_{i_1=1}^\infty\frac{\tau_{\sh^{-i_1}\om}^{i_1}(1)}{\be_{\sh^{-i_1}\om}^{(i_1)}}\sum_{i_2=1}^\infty\frac{\tau_{\sh^{-(i_1+i_2)}\om}^{i_2}(1)}{\be_{\sh^{-(i_1+i_2)}\om}^{(i_2)}}\cdots\sum_{i_n=1}^\infty\frac{\tau_{\sh^{-(i_1+\cdots+i_n)}\om}^{i_n}(1)}{\be_{\sh^{-(i_1+\cdots+i_n)}\om}^{(i_n)}}\]
for $n\geq2$, is a positive function in $L^\infty(\mP)$.
Furthermore, the function $\varphi:\Om\times[0,1]\to\R$ given by
\[\varphi(\om,x)=\sum_{n=0}^\infty\frac{c(\sh^{-n}\om)\1_{[0, \tau_{\sh^{-n}\om}^{n}(1)]}(x)}{\be^{(n)}_{\sh^{-n}\om}}\]
for $(\om,x)\in\Om\times[0,1]$ is non-negative and in $L^1(\mP\times l)$ with $\displaystyle{\int\varphi d\mP\times l}=1$ satisfying
\begin{equation}\label{PF}
\cL_\om \vp_\om=\varphi_{\sh\om}
\end{equation}
for $\mP$-a.e. $\om\in\Om$. That is, $\vp$ is the density function of the $R$-invariant probability measure $\mu$.

\end{theorem}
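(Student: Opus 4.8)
The plan is to recognize the coefficient $c$ as the fixed point of a positive linear contraction on $L^\infty(\mP)$, to read off the integrability and normalization of $\vp$ directly from the associated fixed-point identity, and then to verify the cocycle relation (\ref{PF}) by applying Lemma \ref{key} to each summand of $\vp_\om$ and telescoping. To this end I would introduce the operator $\cM$ on $L^\infty(\mP)$ given by
\[(\cM g)(\om)=\sum_{i=1}^\infty\frac{\tau_{\sh^{-i}\om}^{i}(1)}{\be_{\sh^{-i}\om}^{(i)}}\,g(\sh^{-i}\om).\]
The nested sums defining $c_n$ satisfy $c_n=\cM c_{n-1}$ with $c_0=\1$, so $c_n=\cM^n\1$ and $c=\sum_{n\ge0}(-1)^n\cM^n\1$ is the Neumann series for $(I+\cM)^{-1}\1$. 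Writing $r={\rm essinf}_{\om}\be(\om)>2$ and $\kappa=1/(r-1)$, the bounds $\tau_{\sh^{-i}\om}^{i}(1)\le1$ and $\be_{\sh^{-i}\om}^{(i)}\ge r^{i}$ give $\|\cM\|_{L^\infty\to L^\infty}\le\sum_{i\ge1}r^{-i}=\kappa<1$. Hence the series converges in $L^\infty(\mP)$, so $c$ is well-defined, measurable and bounded, and it satisfies $(I+\cM)c=\1$, that is,
\begin{equation}\label{star}
c(\om)+\sum_{i=1}^\infty\frac{\tau_{\sh^{-i}\om}^{i}(1)}{\be_{\sh^{-i}\om}^{(i)}}\,c(\sh^{-i}\om)=1\qquad\text{for }\mP\text{-a.e. }\om.
\end{equation}

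For positivity I would not estimate the alternating series termwise but instead use the factorization $(I+\cM)^{-1}=(I-\cM^2)^{-1}(I-\cM)$, which is legitimate since $\cM$ commutes with itself and $\|\cM\|<1$. As $\cM$ is positivity-preserving, so is $(I-\cM^2)^{-1}=\sum_{k\ge0}\cM^{2k}$; and $(I-\cM)\1=\1-\cM\1=\1-c_1\ge1-\kappa>0$ pointwise, precisely because $r>2$. Applying the positive operator $(I-\cM^2)^{-1}$ to the function $\1-c_1\ge1-\kappa$ then gives $c\ge1-\kappa>0$. This is the only point where the strong-expansion hypothesis ${\rm essinf}\,\be>2$ is essential.

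Next I would obtain the integrability and mass of $\vp$ from (\ref{star}) with no further computation. Since $\int_0^1\1_{[0,a]}\,dl=a$, integrating $\vp_\om$ over $x$ gives
\[\int_0^1\vp(\om,x)\,dl(x)=\sum_{n=0}^\infty\frac{c(\sh^{-n}\om)\,\tau_{\sh^{-n}\om}^{n}(1)}{\be_{\sh^{-n}\om}^{(n)}},\]
and the right-hand side is exactly the left-hand side of (\ref{star}) (the $n=0$ term being $c(\om)$). Hence $\int_0^1\vp(\om,\cdot)\,dl=1$ for a.e. $\om$, and combined with $\vp\ge0$ (immediate from $c>0$) this shows $\vp\in L^1(\mP\times l)$ with $\int\vp\,d(\mP\times l)=1$.

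Finally, for (\ref{PF}) I would apply $\cL_\om$ to $\vp_\om$ summand by summand, which is justified by absolute convergence and the continuity of $\cL_\om$ on $L^1(l)$, and invoke Lemma \ref{key} with $a=\tau_{\sh^{-n}\om}^{n}(1)$. The key bookkeeping identities are $\tau_\om(\tau_{\sh^{-n}\om}^{n}(1))=\tau_{\sh^{-(n+1)}(\sh\om)}^{\,n+1}(1)$, $\be_\om\,\be_{\sh^{-n}\om}^{(n)}=\be_{\sh^{-(n+1)}(\sh\om)}^{(n+1)}$ and $c(\sh^{-n}\om)=c(\sh^{-(n+1)}(\sh\om))$. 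With these, the contributions $\be_\om^{-1}\1_{[0,\tau_\om(a)]}$ coming from Lemma \ref{key} telescope, after reindexing $m=n+1$, into $\vp_{\sh\om}-c(\sh\om)\1_{[0,1]}$, while the integer-part contributions collect into a single multiple of $\1_{[0,1]}$. Substituting $[\be_\om a]=\be_\om a-\tau_\om(a)$ into that multiple and reindexing once more, the total coefficient of $\1_{[0,1]}$ in $\cL_\om\vp_\om-\vp_{\sh\om}$ reduces to $\int_0^1\vp(\om,\cdot)\,dl-\int_0^1\vp(\sh\om,\cdot)\,dl$, which vanishes since both integrals equal $1$ by the previous step. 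This yields $\cL_\om\vp_\om=\vp_{\sh\om}$ for a.e. $\om$. I expect the main obstacle to be the positivity argument, where the factorization and the hypothesis $r>2$ are indispensable, together with the careful tracking of shifts and reindexing in the telescoping; once identity (\ref{star}) is established, the remaining steps are essentially formal.
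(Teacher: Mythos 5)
Your proposal is correct and follows essentially the same route as the paper: you identify $c$ as the Neumann series for $(I+S)^{-1}\1$ (your $\cM$ is the paper's operator $S$ from Lemma \ref{lemma C-1}), use $\|S\|\le 1/(\ga-1)<1$ from $\ga={\rm essinf}\,\be>2$, and verify $\cL_\om\vp_\om=\vp_{\sh\om}$ by applying Lemma \ref{key} summand by summand; your pointwise telescoping with $[\be_\om a]=\be_\om a-\tau_\om(a)$ is exactly the identity $P_n=Q_{n-1}-Q_n$, i.e. $I-T=(I-U_{\sh^{-1}})(I+S)$, that the paper uses in operator form. Two points where you go beyond the paper's written proof are worth flagging. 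First, your positivity argument via the factorization $(I+\cM)^{-1}=(I-\cM^2)^{-1}(I-\cM)$, giving $c\ge \1-c_1\ge(\ga-2)/(\ga-1)>0$, supplies something the paper only asserts: its proof bounds $|c(\om)|\le(\ga-1)/(\ga-2)$, and a naive termwise estimate of the alternating series would only yield positivity for $1/(\ga-1)$ below the reciprocal golden ratio, not for all $\ga>2$. Second, you explicitly check $\int_0^1\vp(\om,\cdot)\,dl=((I+S)c)(\om)=1$, which the paper's proof of Theorem \ref{Thm C} leaves implicit (it appears only in the proof of Theorem \ref{Thm D}). One minor quibble: positivity is not the \emph{only} place where $\ga>2$ is essential, since the convergence of the Neumann series itself already requires $\|S\|\le 1/(\ga-1)<1$.
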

For the proof of Theorem \ref{Thm C} we need the following two lemmas. 
Let us denoted by $U_\sh$ the Koopman operator associated with $\sh$ on $L^1(\mP)$, i.e., $U_{\sh}f=f\circ\sh$ for $f\in L^1(\mP)$. We also denote by $U_{\sh^{-1}}$ the Koopman operator associated with $\sh^{-1}$. In the following, by slight abuse of notation, we denote by $||\cdot||$ the $L^1$ norm on $L^1(\mP)$ and the operator norm for bounded linear operators on $L^1(\mP)$.

\begin{lemma}\label{lemma C-1}
Assume that $\ga:={\displaystyle{\rm essinf}_{\om\in\Om}\be(\om)>1}$. Let $T:L^1(\mP)\to L^1(\mP)$ be the operator defined by
\[Tf=\sum_{n=1}^\infty\frac{d_n(\sh^{-n}\om,1)}{\be_{\sh^{-n}\om}^{(n)}}U_{\sh^{-n}}f\]
for $f\in L^1(\mP)$ and let $S:L^1(\mP)\to L^1(\mP)$ be the operator defined by 
\[Sf=\sum_{n=1}^\infty\frac{\tau_{\sh^{-n}\om}^{n}(1)}{\be_{\sh^{-n}\om}^{(n)}}U_{\sh^{-n}}f\]
for $f\in L^1(\mP)$. Then $T$ and $S$ are linear and bounded on $L^1(\mP)$ satisfying
\[I-T=(I-U_{\sh^{-1}})(I+S),\]
where $I$ denotes the identity map on $L^1(\mP)$. 
\end{lemma}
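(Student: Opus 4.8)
The plan is to establish boundedness of $T$ and $S$ first, and then to verify the operator identity by a direct term-by-term comparison driven by the digit recursion for $\tau_\psi^m(1)$.

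First I would record that, since $\sh$ is an invertible measure-preserving transformation, each Koopman operator $U_{\sh^{-n}}$ is an isometry of $L^1(\mP)$, and that all the coefficients $d_n(\sh^{-n}\om,1)/\be_{\sh^{-n}\om}^{(n)}$ and $\tau_{\sh^{-n}\om}^n(1)/\be_{\sh^{-n}\om}^{(n)}$ are non-negative. For $T$ I would bound $\|Tf\|$ by pulling the absolute value inside the sum, interchanging sum and integral by Tonelli, and substituting $\om\mapsto\sh^n\om$ (legitimate by $\sh$-invariance of $\mP$) to get $\|Tf\|\le\int_\Om|f(\psi)|\sum_{n=1}^\infty d_n(\psi,1)/\be_\psi^{(n)}\,d\mP(\psi)$. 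Since $\be>1$ a.e. forces the mean-expanding condition (\ref{mean expanding}), Lemma \ref{expansion} gives $\sum_{n=1}^\infty d_n(\psi,1)/\be_\psi^{(n)}=1$ for $\mP$-a.e. $\psi$, so $\|T\|\le1$. For $S$ the same change of variables reduces the bound to $\int_\Om|f(\psi)|\sum_{n=1}^\infty\tau_\psi^n(1)/\be_\psi^{(n)}\,d\mP$, and $\tau_\psi^n(1)\le1$ together with $\be_\psi^{(n)}\ge\ga^n$ yields the geometric bound $\|S\|\le1/(\ga-1)<\infty$. These estimates also show the defining series converge in $L^1(\mP)$, so $T$ and $S$ are well-defined bounded linear operators.

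For the identity I would expand $(I-U_{\sh^{-1}})(I+S)=I+S-U_{\sh^{-1}}-U_{\sh^{-1}}S$, reducing the claim to $T=U_{\sh^{-1}}+U_{\sh^{-1}}S-S$. Pushing $U_{\sh^{-1}}$ through the convergent series defining $S$ and reindexing, one has $U_{\sh^{-1}}Sf(\om)=\sum_{m=2}^\infty(\tau_{\sh^{-m}\om}^{m-1}(1)/\be_{\sh^{-m}\om}^{(m-1)})f(\sh^{-m}\om)$; since $\tau^0=id$ and $\be^{(0)}=1$, the term $U_{\sh^{-1}}f(\om)=f(\sh^{-1}\om)$ is precisely the missing $m=1$ term, so $U_{\sh^{-1}}+U_{\sh^{-1}}S$ acts as $\sum_{m=1}^\infty(\tau_{\sh^{-m}\om}^{m-1}(1)/\be_{\sh^{-m}\om}^{(m-1)})f(\sh^{-m}\om)$. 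Comparing the coefficients of $f(\sh^{-m}\om)$ in $U_{\sh^{-1}}+U_{\sh^{-1}}S-S$ and in $T$ for each $m\ge1$, and writing $\psi=\sh^{-m}\om$ with $\be_\psi^{(m)}/\be_\psi^{(m-1)}=\be_{\sh^{m-1}\psi}$, the desired equality reduces (after multiplying by $\be_\psi^{(m)}$) to $\be_{\sh^{m-1}\psi}\tau_\psi^{m-1}(1)-\tau_\psi^m(1)=d_m(\psi,1)$. This is exactly the one-step relation $\tau_\psi^m(1)=\be_{\sh^{m-1}\psi}\tau_\psi^{m-1}(1)-[\be_{\sh^{m-1}\psi}\tau_\psi^{m-1}(1)]$ together with $d_m(\psi,1)=[\be_{\sh^{m-1}\psi}\tau_\psi^{m-1}(1)]$, valid for all $m\ge1$ (the case $m=1$ being $\be_\psi-\tau_\psi(1)=[\be_\psi]$). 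Since all coefficients agree, the operator identity follows.

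The computations are routine; the only delicate point is the justification of the term-by-term manipulations. The main (mild) obstacle is legitimizing the passage of $U_{\sh^{-1}}$ inside the infinite sum defining $S$ and the subsequent reindexing: this is where the $L^1$-convergence of the defining series (from the boundedness estimates of the first step) and the continuity of $U_{\sh^{-1}}$ must be invoked, rather than treating the series purely formally.
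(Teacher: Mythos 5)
Your proposal is correct and follows essentially the same route as the paper: the same change-of-variables estimates for the boundedness of $T$ and $S$ (using $\sum_n d_n(\om,1)/\be_\om^{(n)}=1$ a.e.\ and the geometric bound $1/(\ga-1)$), and the same one-step digit relation $d_m(\psi,1)/\be_\psi^{(m)}=\tau_\psi^{m-1}(1)/\be_\psi^{(m-1)}-\tau_\psi^{m}(1)/\be_\psi^{(m)}$ to verify the factorization, which the paper packages as $P_n=Q_{n-1}-Q_n$ and telescopes while you expand $(I-U_{\sh^{-1}})(I+S)$ and match coefficients of $U_{\sh^{-m}}$. The two computations are the same argument read in opposite directions.
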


\begin{proof}
Since 
\begin{align*}
||Tf||
&\leq \int_\Om \sum_{n=1}^\infty \frac{d_n(\sh^{-n}\om,1)}{\be_{\sh^{-n}\om}^{(n)}}|f(\sh^{-n}\om)|d\mP(\om)
=  \int_\Om \sum_{n=1}^\infty \frac{d_n(\om,1)}{\be_{\om}^{(n)}}|f(\om)| d\mP(\om) \\
&=||f||
\end{align*}
and
\begin{align*}
||Sf||
&\leq \int_\Om \sum_{n=1}^\infty \frac{\tau_{\sh^{-n}\om}^n(1)}{\be_{\sh^{-n}\om}^{(n)}}|f(\sh^{-n}\om)|d\mP(\om)
=  \int_\Om \sum_{n=1}^\infty \frac{\tau_\om^n(1)}{\be_{\om}^{(n)}}|f(\om)| d\mP(\om) \\
&\leq \frac{\ga}{\ga-1}||f||
\end{align*}
for $f\in L^1(\mP)$, we have that the operators $T$ and $S$ are bounded on $L^1(\mP)$. The fact that these operators are linear follows from their definitions directly.

For each positive integer $n\geq1$ let us define the operators $P_n$ and $Q_n$ by
\[
(P_nf)(\om)=\frac{d_n(\om,1)}{\be_\om^{(n)}}f(\om)
\]
and 
\[(Q_nf)(\om)=\frac{\tau_\om^n(1)}{\be_\om^{(n)}}f(\om)\]
for $f \in L^1(\mP)$ and $\om\in\Om$. Since the functions $d_n(\om,1)/\be_\om^{(n)}$ and $\tau_\om^n(1)/\be_\om^{(n)}$ are in $L^\infty(\mP)$, we know that $P_n$ and $Q_n$ are linear bounded operators on $L^1(\mP)$. From the definition of the map $\tau$ and the digit function $d_n$ for each $n\geq1$ we have
\[\frac{d_n(\om,1)}{\be_\om^{(n)}}=\frac{\tau_\om^{n-1}(1)}{\be_\om^{(n-1)}}-\frac{\tau_\om^{n}(1)}{\be_\om^{(n)}}\]
for $\om\in\Om$ and $n\geq1$, which yields the equation
\[P_n=Q_{n-1}-Q_n\]
for each $n\geq1$, where $Q_0=I$. 
Hence
\begin{linenomath}
\begin{align*}
(I-T)
&=I-\sum_{n=1}^\infty U_{\sh^{-n}}P_n
=I-\sum_{n=1}^\infty U_{\sh^{-n}}(Q_{n-1}-Q_n) \\
&=S-U_{\sh^{-1}}\sum_{n=1}^\infty U_{\sh^{-(n-1)}}Q_{n-1}
=(I-U_{\sh^{-1}})(I+S),
\end{align*}
\end{linenomath}
which yields the conclusion.
\end{proof}

\begin{lemma}\label{lemma C-2}
Assume that $\ga:={\displaystyle{\rm essinf}_{\om\in\Om}\be(\om)>2}$. 
Let $S$ be the linear bounded operator on $L^1(\mP)$ defined as in Lemma \ref{lemma C-1}.
Then we have $||S||<1$. In particular, $I+S$ has the inverse given by 
\[(I+S)^{-1}=\sum_{n=0}^{\infty}(-S)^n.\]
\end{lemma}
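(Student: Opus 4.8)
The plan is to estimate the operator norm $||S||$ directly from its defining series and show it is strictly less than $1$ under the hypothesis $\ga>2$. Recall from the proof of Lemma \ref{lemma C-1} that for $f\in L^1(\mP)$ we already have the bound $||Sf||\leq\frac{\ga}{\ga-1}||f||$, obtained by change of variables under the measure-preserving map $\sh$ together with the geometric estimate $\tau_{\om}^n(1)\leq1$ and $\be_\om^{(n)}\geq\ga^n$. This crude bound gives $\frac{\ga}{\ga-1}$, which exceeds $1$ for all finite $\ga$, so the main point is that it is wasteful: it throws away the fact that the $n=1$ term alone is a genuine contraction once $\ga>1$ and that the higher-order terms decay geometrically in $1/\ga$.

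First I would repeat the change-of-variables step carefully. Using that each $U_{\sh^{-n}}$ is an isometry on $L^1(\mP)$ (as $\sh$ is measure preserving) and that the multipliers $\tau_{\om}^n(1)/\be_\om^{(n)}$ are nonnegative, I would write
\begin{linenomath}
\begin{align*}
||Sf||
&\leq\int_\Om\sum_{n=1}^\infty\frac{\tau_{\sh^{-n}\om}^n(1)}{\be_{\sh^{-n}\om}^{(n)}}|f(\sh^{-n}\om)|\,d\mP(\om)
=\int_\Om\Biggl(\sum_{n=1}^\infty\frac{\tau_\om^n(1)}{\be_\om^{(n)}}\Biggr)|f(\om)|\,d\mP(\om).
\end{align*}
\end{linenomath}
Thus $||S||$ is controlled by the essential supremum over $\om$ of the scalar sum $\sum_{n=1}^\infty\tau_\om^n(1)/\be_\om^{(n)}$, and it suffices to show this sum is bounded by a constant strictly below $1$ uniformly in $\om$.

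The key improvement is to bound this sum sharply. Since $\be(\sh^i\om)\geq\ga$ for $\mP$-a.e.\ $\om$ and all $i$, we have $\be_\om^{(n)}\geq\ga^n$; combined with $\tau_\om^n(1)\in[0,1)$ this yields $\sum_{n=1}^\infty\tau_\om^n(1)/\be_\om^{(n)}\leq\sum_{n=1}^\infty\ga^{-n}=\frac{1}{\ga-1}$. The hypothesis $\ga>2$ gives $\frac{1}{\ga-1}<1$, hence $||S||\leq\frac{1}{\ga-1}<1$, which is exactly the threshold where this argument breaks down for $\ga\leq2$. The main obstacle, if any, is purely bookkeeping: one must confirm that the interchange of summation and integration is justified (it is, by Tonelli, since every term is nonnegative) and that the essential infimum hypothesis delivers the pointwise bound $\be_\om^{(n)}\geq\ga^n$ for $\mP$-a.e.\ $\om$ simultaneously for all $n$, which follows because a countable union of null sets is null.

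Once $||S||<1$ is established, the invertibility of $I+S$ and the Neumann series representation $(I+S)^{-1}=\sum_{n=0}^\infty(-S)^n$ are immediate: the series converges absolutely in operator norm because $\sum_{n=0}^\infty||S||^n=\frac{1}{1-||S||}<\infty$, and a direct telescoping computation shows $(I+S)\sum_{n=0}^\infty(-S)^n=\sum_{n=0}^\infty(-S)^n(I+S)=I$. I would state this final step briefly, as it is the standard Neumann-series argument in a Banach algebra and requires no input beyond the norm estimate.
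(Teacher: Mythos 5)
Your proposal is correct and follows essentially the same route as the paper: bound the multiplier sum by $\sum_{n\geq1}\ga^{-n}=1/(\ga-1)<1$ using $\tau_\om^n(1)\leq1$ and $\be_\om^{(n)}\geq\ga^n$, then invoke the Neumann series. The extra care you take with Tonelli and the change of variables is implicit in the paper's one-line estimate, so there is no substantive difference.
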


\begin{proof}
By the fact that $\tau_\om^n(1)\in[0,1]$ for any $n\geq0$ and $\om\in\Om$, we have
\[||Sf||
\leq\sum_{n=1}^\infty\frac{1}{\ga^n}||f||
\leq\frac{1}{\ga-1}||f||.\]
Since $1/(\ga-1)<1$ by the assumption $\ga>2$, we obtain $||S||<1$ by the above inequality. This yields the Neumann series $\sum_{n=0}^\infty(-S)^n$ absolutely converges and it is the inverse of $(I+S)$:
\[(I+S)^{-1}=\sum_{n=0}^\infty(-S)^n.\]

\end{proof}

\begin{proof}[Proof of Theorem \ref{Thm C}]
Let $n\geq1$. By definition the function $c_n(\om)$ is non-negative and 
\[c_n(\om)\leq\Biggl(\sum_{i=1}\frac{1}{\ga^i}\Biggr)^n=\Biggl (\frac{1}{\ga-1}\Biggr)^n.
\]
Since $\ga>2$, we have that $(\ga-1)^{-1}<1$, which guarantees that 
\[|c(\om)|
\leq\frac{1}{1-\frac{1}{\ga-1}}
=\frac{\ga-1}{\ga-2}<+\infty\]
for $\mP$-a.e. $\om\in\Om$. This shows that $c(\om)$ is in $L^\infty(\mP)$. 

By Lemma \ref{key}, we see that
\begin{linenomath}
\begin{align*}
\cL_\om\vp_\om
&=\sum_{n=0}^\infty\frac{c(\sh^{-n}\om)}{\be_{\sh^{-n}\om}^{(n)}}
\Biggr(\frac{[\be_\om\tau_{\sh^{-n}\om}^n(1)]}{\be_\om}+\frac{\1_{[0,\tau_\om\circ\tau_{\sh^{-n}\om}^n(1)]}}{\be_\om}\Biggr) \\
&=\sum_{n=0}^\infty\frac{c(\sh^{-n}\om)}{\be_{\sh^{-n}\om}^{(n+1)}}[\be_\om\tau_{\sh^{-n}\om}^n(1)]\1_{[0,1]}
+\sum_{n=0}^\infty\frac{c(\sh^{-n}\om)}{\be_{\sh^{-n}\om}^{(n+1)}}\1_{[0,\tau_{\sh^{-n}\om}^{n+1}(1)]} \\
&=\sum_{n=0}^\infty\frac{c(\sh^{-n}\om)}{\be_{\sh^{-n}\om}^{(n+1)}}d_{n+1}(\sh^{-n}\om,1) \1_{[0,1]}
+\sum_{n=1}^\infty\frac{c(\sh^{-n}(\sh\om))}{\be_{\sh^{-n}(\sh\om)}^{(n)}}\1_{[0,\tau_{\sh^{-n}(\sh\om)}^{n}(1)]}.
\end{align*}
\end{linenomath}
Since 
\[\cL_\om\vp_\om-\vp_\om=\Biggl(\sum_{n=0}^\infty\frac{c(\sh^{-n}\om)}{\be_{\sh^{-n}\om}^{(n+1)}}d_{n+1}(\sh^{-n}\om,1)-c(\sh\om)\Biggr)\1_{[0,1]}, \]
we have $\cL_\om\vp_\om=\vp_{\sh\om}$ if $c(\om)$ satisfies 
\begin{equation}\label{functional equation}
c(\om)=\sum_{n=1}^\infty\frac{d_{n}(\sh^{-n}\om,1)}{\be_{\sh^{-n}\om}^{(n)}}c(\sh^{-n}\om)
\end{equation}
for $\mP$-a.e. $\om\in\Om$.

As we see in Lemma \ref{lemma C-1}, the operators $T:L^1(\mP)\to L^1(\mP)$ and $S:L^1(\mP)\to L^1(\mP)$ defined by 
\[Tf=\sum_{n=1}^\infty\frac{d_n(\sh^{-n}\om,1)}{\be_{\sh^{-n}\om}^{(n)}}U_{\sh^{-n}}f, \ 
Sf=\sum_{n=1}^\infty\frac{\tau_{\sh^{-n}\om}^{n}(1)}{\be_{\sh^{-n}\om}^{(n)}}U_{\sh^{-n}}f\]
for $f\in L^1(\mP)$ are linear and bounded on $L^1(\mP)$ satisfying 
\[I-T=(I-U_{\sh^{-1}})(I+S).\]
We note that the equation (\ref{functional equation}) is equivalent to 
\[(I-T)c(\om)=0\]
for $\mP$-a.e. $\om\in\Om$. That is, if the function $c(\om)$ is in the kernel of $I-T$ we obtain the desired result. By Lemma \ref{lemma C-2}, the operator $I+S$ has the inverse $\sum_{n=1}^\infty(-S)^n$. By the definition of the operator $S$ and the function $c(\om)$ we see that 
\[(I+S)^{-1}1=\sum_{n=0}^{\infty}(-S)^n 1=\sum_{n=0}^{\infty}(-1)^n c_n(\om)=c(\om).\]
Hence we have 
\[(I-T)c=(I-U_{\sh^{-1}})(I+S)(I+S)^{-1}1=(I-U_{\sh^{-1}})1=0.\]
This shows that the function $c(\om)$ is in the kernel of $I-T$, which gives the conclusion.
\end{proof}

\end{subsection}

\begin{subsection}{Small random perturbation of the map for a non-simple number}
In the following we consider the case that the value of the random valuable $\be(\om)$ is sufficiently close to some non-simple number. Recall that $\be_0>1$ is said to be  non-simple if there is no positive integer $N$ such that 
$\tau_{\be_0}^N(1)=0$. Put $\ep'=\min\{\langle\be_0\rangle, 1-\langle\be_0\rangle\}$. Then every $\be\in(\be_0-\ep', \be_0+\ep')$ has the same integer part, that is, $[\be]=[\be_0]$. 

The important fact to show the main result is that if the image of $\be(\om)$ is in $(\be_0-\ep, \be_0+\ep)$ for sufficiently small $\ep>0$ then the operator $I+S$, where $S :L^1(\mP)\to L^1(\mP)$ is the operator defined in Lemma \ref{lemma C-1}, is invertible. This yields that $c(\om)=(I+S)^{-1}1$ satisfies the equation $(\ref{functional equation})$ and the function 
\[\varphi(\om,x)=\sum_{n=0}^\infty\frac{c(\sh^{-n}\om)\1_{[0, \tau_{\sh^{-n}\om}^{n}(1)]}(x)}{\be^{(n)}_{\sh^{-n}\om}},\]
for $(\om,x)\in\Om\times[0,1]$, satisfies $\cL_\om\vp_\om=\vp_{\sh\om}$
as in the proof of Theorem \ref{Thm C}.  
Let us define the operator $V:L^1(\mP)\to L^1(\mP)$ by
\[V(f)=\sum_{n=1}^\infty \frac{T_{\be_0}^n(1)}{\be_0^n}U_{\sh^{-1}}^n\]
for $f\in L^1(\mP)$. Since $||Vf||\leq \sum_{n=1}^\infty \frac{T_{\be_0}^n(1)}{\be_0^n}||f||$ for $f\in L^1(\mP)$, the operator $V$ is bounded. We can see that $V$ is linear by definition. 
The next lemma shows that the operator $I+V$ is invertible and the value of its operator norm
$||(I+V)^{-1}||$ is bounded by a certain positive constant related to the orbit of $1$ by the map $T_{\be_0}$.

\begin{lemma} \label{lemma D-1} 
(1)  The power series $\xi(z):=\xi_{\be_0}(z)=\sum_{n=0}^{\infty}T_{\be_0}^n(1)z^n/\be_0^n$ has $\be_0$ as its convergence radius and no zero in the unit closed ball $\{z\in\C\ ;\ |z|\leq1\}$.
In particular,  the Maclaurin series $\chi(z):=\chi_{\be_0}(z)=\sum_{n=0}^\infty t_n z^n$ of $1/\xi(z)$ has its convergence radius greater than $1$. 

(2) The power series $\chi(U_{\sh^{-1}})=\sum_{n=0}^\infty t_n U_{\sh^{-1}}^n$ is well-defined and we have 

\begin{equation}\label{inverse}
(I+V)\circ\chi(U_{\sh^{-1}})=\chi(U_{\sh^{-1}})\circ(I+V)=I.
\end{equation}
That is, $(I+V)^{-1}=\sum_{n=0}^\infty t_n U_{\sh^{-1}}^n$.
\end{lemma}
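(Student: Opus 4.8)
The plan is to reduce both assertions to classical facts about the orbit of $1$ under $T_{\be_0}$ via a generating-function identity, and then to transfer the scalar conclusions to the operator $U_{\sh^{-1}}$, exploiting that $U_{\sh^{-1}}$ is an isometry of $L^1(\mP)$.

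\emph{Part (1).} Let $d_n=[\be_0 T_{\be_0}^{n-1}(1)]$ denote the digits of the greedy $\be_0$-expansion of $1$, so that $T_{\be_0}^n(1)=\be_0 T_{\be_0}^{n-1}(1)-d_n$ for $n\ge1$ and $\sum_{n\ge1}d_n/\be_0^n=1$. Writing $D(z)=\sum_{n\ge1}(d_n/\be_0^n)z^n$ and multiplying the defining series of $\xi$ by $(1-z)$, the recursion telescopes: since the coefficient of $z^n$ in $\xi$ is $T_{\be_0}^n(1)/\be_0^n$ and consecutive coefficients differ by $-d_n/\be_0^n$, I get
\[(1-z)\,\xi(z)=1-D(z),\qquad\text{hence}\qquad \xi(z)=\frac{1-D(z)}{1-z},\]
with $D(1)=1$ cancelling the apparent pole at $z=1$. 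Because $0\le d_n\le[\be_0]$ and, by non-simplicity of $\be_0$, infinitely many $d_n$ are nonzero (hence $\ge1$), one has $\limsup_n d_n^{1/n}=1$, so $D$ has radius of convergence $\be_0$, and by Pringsheim's theorem (nonnegative coefficients) $z=\be_0$ is a singular point of $D$. As $1/(1-z)$ is holomorphic off $z=1$, the quotient $\xi$ is holomorphic on $|z|<\be_0$ and inherits the singularity at $z=\be_0$; together with the crude bound $T_{\be_0}^n(1)<1$ (which gives radius $\ge\be_0$) this pins the radius of convergence of $\xi$ at $\be_0$.

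For the absence of zeros I would work on the closed unit disk directly. For $|z|\le1$, $z\ne1$, a zero of $\xi$ is exactly a solution of $D(z)=1$; but $0=1-\operatorname{Re}D(z)=\sum_{n\ge1}(d_n/\be_0^n)\bigl(1-\operatorname{Re}(z^n)\bigr)$ with every summand nonnegative, forcing $\operatorname{Re}(z^n)=1$ for each $n$ with $d_n>0$. Since $|z^n|\le1$, this forces $z^n=1$ for those $n$, and $d_1=[\be_0]\ge1$ then gives $z=1$, a contradiction. At $z=1$ the removable value is $\xi(1)=D'(1)=\sum_{n\ge1}n\,d_n/\be_0^n>0$. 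Hence $\xi$ is zero-free on $\{|z|\le1\}$; by compactness it is holomorphic and nonvanishing on some disk $|z|<1+\delta$ with $1+\delta<\be_0$, so $\chi=1/\xi$ is holomorphic there and its Maclaurin series has radius of convergence $>1$.

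\emph{Part (2).} Since the radius of $\chi$ exceeds $1$ we have $\sum_n|t_n|<\infty$, and since $\sh$ is measure preserving $U_{\sh^{-1}}$ is an $L^1(\mP)$-isometry, so $\|U_{\sh^{-1}}^n\|=1$ and $\chi(U_{\sh^{-1}})=\sum_n t_n U_{\sh^{-1}}^n$ converges in operator norm to a bounded operator. Setting $a_n=T_{\be_0}^n(1)/\be_0^n$, I observe that $I+V=\sum_{n\ge0}a_n U_{\sh^{-1}}^n=\xi(U_{\sh^{-1}})$ is likewise absolutely norm-convergent (as $a_n\le\be_0^{-n}$), so I may multiply the two series and rearrange: absolute operator-norm convergence together with $\|U_{\sh^{-1}}^n\|=1$ justifies the Cauchy product, whose $n$-th coefficient $\sum_{k=0}^n a_k t_{n-k}$ is the $n$-th coefficient of $\xi(z)\chi(z)\equiv1$, i.e.\ $\delta_{n,0}$. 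This collapses both products to the identity, giving $(I+V)\chi(U_{\sh^{-1}})=\chi(U_{\sh^{-1}})(I+V)=I$ and hence $(I+V)^{-1}=\sum_n t_n U_{\sh^{-1}}^n$.

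\emph{Main obstacle.} The routine part is Part (2), a clean power-series functional-calculus argument for the isometry $U_{\sh^{-1}}$. The real work is Part (1), and within it the nonvanishing of $\xi$ on the unit \emph{circle}: the boundary case $|z|=1$ must be handled carefully, and it is exactly non-simplicity of $\be_0$ (ensuring $d_1\ge1$ and infinitely many nonzero digits) that simultaneously rules out boundary zeros and fixes the radius of convergence at $\be_0$.
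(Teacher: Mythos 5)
Your proposal is correct and follows essentially the same route as the paper: the telescoping identity $(1-z)\xi(z)=1-\sum_{n\ge1}[\be_0T_{\be_0}^{n-1}(1)]z^n/\be_0^n$, nonvanishing of $\xi$ on the closed unit disk, hence a Maclaurin series for $1/\xi$ with radius exceeding $1$, transferred to the contraction $U_{\sh^{-1}}$ by an absolutely convergent operator power series. The only differences are that you supply self-contained arguments for the two steps the paper delegates to citations --- you re-derive the strict inequality on $|z|\le1$, $z\ne1$ (which the paper quotes from Flatto--Lagarias--Poonen) via the nonnegativity of $\sum_n (d_n/\be_0^n)(1-\operatorname{Re}(z^n))$, and you spell out the Cauchy-product computation that the paper attributes to Dunford--Schwartz --- and you additionally pin the radius of $\xi$ at exactly $\be_0$ via Pringsheim, a point the paper asserts but only proves as a lower bound.
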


\begin{proof}
(1) Since $T_{\be_0}^n(1)\in[0,1]$ for $n\geq1$ we know that $\xi(z)$ has the convergence radius greater than or equal to $\be_0$. By using the relation 
\[\frac{[\be_0T_{\be_0}^{n-1}(1)]}{\be_0^n}=\frac{T_{\be_0}^{n-1}(1)}{\be^{n-1}_0}-\frac{T_{\be_0}^{n}(1)}{\be^{n}_0}\]
for $n\geq1$, which follows from the definition of the map $T_{\be_0}$, we have
\begin{equation}\label{Lem d1}
1-\sum_{n=1}^\infty\frac{[\be_0T_{\be_0}^{n-1}(1)]}{\be_0^n}z^n=(1-z)\xi(z)
\end{equation}
for $|z|<\be_0$. Note that 
\[\Biggl|\sum_{n=1}^\infty\frac{[\be_0 T_{\be_0}^{n-1}(1)]}{\be_0^n}z^n\Biggr|<\sum_{n=1}^\infty\frac{[\be_0 T_{\be_0}^{n-1}(1)]}{\be_0^n}=1\]
for $|z|\leq1$ except $z=1$ (see \cite{Fl-La}). Together with (\ref{Lem d1}), we have that $\xi(z)$ has no zero in $|z|\leq1$, which yields that the inverse of $\xi(z)$ can be defined as an analytic function in some open ball whose radius is greater than $1$. In particular, its Maclaurin series $\chi(z)=\sum_{n=0}^\infty t_n z^n$ has its convergence radius greater than $1$. 

(2) Note that $||U_{\sh^{-1}}||\leq1$. Then 
\[||\sum_{n=0}^\infty t_n U_{\sh^{-1}}^n||\leq\sum_{n=0}^\infty |t_n|<+\infty,\]
which ensures that the operator $\chi(U_{\sh^{-1}})$ is well-defined. 
Note that $I+V=\xi(U_{\sh^{-1}})$. Hence the equation (\ref{inverse}) follows from the fact that $\chi(z)$ is the inverse of $\xi(z)$ defined on the unit closed ball and 10 Theorem (b) in \cite{Du-Sc}. 

\end{proof}
Let $S:L^1(\mP)\to\ L^1(\mP)$ be the bounded linear operator defined in Lemma \ref{lemma C-1}. Using the fact that $V$ is invertible, we see that 
\[I+S=(I-(V-(I+S))V^{-1})V.\]
Since $||(V-(I+S))V^{-1}||\leq ||V-(I+S)||\cdot||V^{-1}||\leq||V-(I+S)||\chi(1)$, 
under the assumption that 
\[||V-(I+S)||<1/\chi(1),\]
the inverse of the operator $I+S$ is given by 
\[(I+S)^{-1}=V^{-1}\sum_{n=0}^\infty\Bigl\{(V-(I+S))V^{-1}\Bigr\}^n.\]
In the next lemma we give a sufficient condition to ensure that $||V-(I+S)||<1/\chi(1)$.

\begin{lemma}\label{lemma D-2}
Let $N\geq1$ be a positive integer satisfying $\sum_{n=N+1}^\infty2/\be_{0}^n<1/2\chi(1)$.
Then there is a positive number $\delta>0$ such that $[\be T_\be^i(1)]=[\be_0 T_{\be_0}^i(1)]$ for $1\leq i\leq N$ whenever $\be\in(\be_0-\delta,\be_0+\delta)$.
Set
\[\ep_1=\frac{1}{2}\min\{\langle\be_0\rangle, 1-\langle \be_0\rangle\},\ \ep_2=\frac{1}{4}(\be_0-1)([\be_0]-1+\langle\be_0\rangle/2)(\chi(1))^{-1}\]
and $\ep_0=\min\{\delta, \ep_1, \ep_2\}$, where $\langle y\rangle$ denotes the fractional part of $y\in\R$.
Then if $\be(\om)\in(\be_0-\ep_0, \be_0+\ep_0)$ for $\mP$-a.e. $\om\in\Om$, we have 
\[||V-(I+S)||<1/\chi(1).\]

\end{lemma}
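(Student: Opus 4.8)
The plan is to turn the operator inequality into a scalar series estimate and then split that series at the threshold $N$. Expanding $V$ and $I+S$ as power series in $U_{\sh^{-1}}$, their zeroth-order terms cancel, so the operator to be estimated is $\sum_{n=1}^\infty a_n(\om)\,U_{\sh^{-1}}^n$ on $L^1(\mP)$, with
\[a_n(\om)=\frac{T_{\be_0}^n(1)}{\be_0^n}-\frac{\tau_{\sh^{-n}\om}^n(1)}{\be_{\sh^{-n}\om}^{(n)}}.\]
Since $\sh$ is measure preserving, $\|U_{\sh^{-1}}^n f\|=\|f\|$, and hence for $\om$-measurable multipliers $\|\sum_n b_n U_{\sh^{-1}}^n\|\le\sum_n\|b_n\|_{L^\infty(\mP)}$. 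Thus it suffices to show $\sum_{n=1}^\infty\|a_n\|_{L^\infty(\mP)}<1/\chi(1)$, and I would arrange both the head $\sum_{n\le N}$ and the tail $\sum_{n>N}$ to lie strictly below $1/(2\chi(1))$, which accounts for the factor $2$ in the hypothesis on $N$.

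First I would settle the existence of $\delta$ and the digit-coincidence claim. Because $\be_0$ is non-simple, $\be_0 T_{\be_0}^{i}(1)$ is never an integer (an integer value would force $T_{\be_0}^{i+1}(1)=0$), so each has positive distance to $\Z$, and over the finite range $0\le i\le N$ these distances have a common lower bound $\eta>0$. Joint continuity of the finite orbit segment $\be\mapsto(T_\be(1),\dots,T_\be^N(1))$ away from those integer levels then produces, by a short induction on $i$, a $\delta>0$ with $[\be T_\be^i(1)]=[\be_0 T_{\be_0}^i(1)]$ for $1\le i\le N$ whenever $|\be-\be_0|<\delta$; the constant $\ep_1$ plays the same role for the leading digit, keeping $[\be]=[\be_0]$. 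The same gap $\eta$ and the uniform slope bound $|\be(\sh^{k}\om)-\be_0|<\ep_0$ let me run the identical induction for the non-autonomous orbit driven by the randomly chosen maps, so that for $\mP$-a.e. $\om$ the random digits $d_i(\sh^{-n}\om,1)$ coincide with $k_i:=[\be_0 T_{\be_0}^{i-1}(1)]$ for all $1\le i\le n\le N$.

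With the digits matched I would estimate the head. Telescoping both quotients via the identities $[\be_0 T_{\be_0}^{i-1}(1)]/\be_0^i=T_{\be_0}^{i-1}(1)/\be_0^{i-1}-T_{\be_0}^i(1)/\be_0^i$ and $d_i/\be_\om^{(i)}=\tau_\om^{i-1}(1)/\be_\om^{(i-1)}-\tau_\om^i(1)/\be_\om^{(i)}$ already used in Lemmas \ref{lemma C-1} and \ref{lemma D-1}, the matched digits give
\[a_n(\om)=\sum_{i=1}^n k_i\Bigl(\frac1{\be_{\sh^{-n}\om}^{(i)}}-\frac1{\be_0^i}\Bigr),\qquad k_i\le[\be_0].\]
Each factor in parentheses is $O(\ep_0)$ and geometrically small in $i$ (bounded by $(\be_0-\ep_0)^{-i}-(\be_0+\ep_0)^{-i}$); combined with the trivial decay $\|a_n\|_\infty\le(\be_0-\ep_0)^{-n}$ coming from $T_{\be_0}^n(1),\tau_{\sh^{-n}\om}^n(1)\in[0,1]$, this bounds the head sum by a quantity tending to $0$ as $\ep_0\to0$, the explicit $\ep_2$ recording the resulting per-term constant (whence the factors $\be_0-1$ and $[\be_0]-1+\langle\be_0\rangle/2$). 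Imposing $\ep_0\le\min\{\delta,\ep_1,\ep_2\}$ then secures $\sum_{n=1}^N\|a_n\|_\infty<1/(2\chi(1))$. For the tail $n>N$ I would use only the trivial bounds, giving $\|a_n\|_\infty\le\be_0^{-n}+(\be_0-\ep_0)^{-n}$, so that for $\ep_0$ small (using the strictness of the hypothesis and continuity) $\sum_{n>N}\|a_n\|_\infty\le\sum_{n>N}2/\be_0^n<1/(2\chi(1))$. Adding the two halves yields $\|V-(I+S)\|<1/\chi(1)$.

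The main obstacle will be the head estimate: propagating the digit coincidence through the non-autonomous, randomly driven orbit (for which one only has the single-map $\delta$ in hand) and tracking the $\om$-dependent denominators $\be_{\sh^{-n}\om}^{(i)}$ precisely enough to extract the clean explicit constant $\ep_2$. By contrast, the tail is routine geometric bookkeeping and the existence of $\delta$ is a standard continuity-away-from-discontinuities argument resting on the non-simplicity of $\be_0$.
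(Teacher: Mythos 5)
Your overall strategy matches the paper's: bound $\|V-(I+S)\|$ by the scalar series $\sum_{n\ge1}\sup_\om|T_{\be_0}^n(1)/\be_0^n-\tau_{\sh^{-n}\om}^n(1)/\be_{\sh^{-n}\om}^{(n)}|$ using $\|U_{\sh^{-1}}\|\le1$, obtain the digit coincidence up to time $N$ by continuity away from the integer levels (using non-simplicity of $\be_0$), and split the series at $N$. You are also right, and more explicit than the paper, that the coincidence must be propagated along the non-autonomous orbit, not just for a single $T_\be$. But two quantitative steps do not close as written, and both concern delivering the bound with the \emph{stated} constants $\ep_1,\ep_2$. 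For the head, after telescoping you bound $|a_n|$ by $[\be_0]\sum_{i=1}^n(1/(\be_0-\ep_0)^i-1/\be_0^i)$; this is of order $\ep_0$ uniformly in $n$ but does not decay in $n$, so summing over $1\le n\le N$ yields a constant proportional to $N\ep_0$. Since $N$ must grow with $\chi(1)$ while $\ep_2$ is independent of $N$, the claim that $\ep_2$ ``records the per-term constant'' skips exactly this accumulation. The paper instead keeps the prefactor $1/\be_0^n$ in front of $|T_{\be_0}^n(1)-\tau_{\sh^{-n}\om}^n(1)|$, so that each head term carries geometric decay in $n$ and the sum converges to an $N$-free multiple of $\ep_0$; some mechanism of this kind is indispensable and is absent from your argument.

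For the tail, the asserted inequality $\sum_{n>N}\|a_n\|_\infty\le\sum_{n>N}2/\be_0^n$ is false for any $\ep_0>0$, because your own bound $\be_0^{-n}+(\be_0-\ep_0)^{-n}$ strictly exceeds $2\be_0^{-n}$; repairing it by ``taking $\ep_0$ smaller using strictness'' replaces the explicit recipe $\ep_0=\min\{\delta,\ep_1,\ep_2\}$ by an unquantified smallness condition. The paper's bookkeeping avoids this: it splits each term as $\be_0^{-n}|T_{\be_0}^n(1)-\tau_{\sh^{-n}\om}^n(1)|+|\be_0^{-n}-1/\be_{\sh^{-n}\om}^{(n)}|$, bounds the tail of the first piece by $\sum_{n>N}2/\be_0^n<1/(2\chi(1))$ with denominator $\be_0^n$ exactly, and routes the entire second piece, $\sum_{n\ge1}\bigl(1/(\be_0-\ep_0)^n-1/\be_0^n\bigr)=\ep_0/\bigl((\be_0-1)(\be_0-\ep_0-1)\bigr)$, into the $\ep_2$-budget together with the head; there $\ep_1$ is used to bound $\be_0-\ep_0-1$ from below by $[\be_0]-1+\langle\be_0\rangle/2$ (not merely to fix the leading digit, as you suggest). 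You should reorganize your estimate along these lines before the proof can be considered complete.
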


\begin{proof}
For $n\geq1$, set
\[P_n (\be)=\be^n-\sum_{i=1}^{n}[\be_0 T^{i-1}_{\be_0}(1)]\be^{n-i}\]
for $\be\in([\be_0],[\be_0+1])$. Note that $P_n(\be_0)=T_{\be_0}^n(1)$ for $n\geq1$.
By the assumption that $\be_0$ is non-simple, we have that 
\[\frac{[\be_0 T^n_{\be_0}(1)]}{\be_0}<T_{\be_0}^n(1)=P_n(\be_0)<\frac{[\be_0 T^n_{\be_0}(1)]+1}{\be_0}\]
for $n\geq1$. By the continuity of the polynomial $P_1 (\be)$, we have that
there is a positive number $\del_1>0$ such that 
\[\frac{[\be_0T_{\be_0}(1)]}{\be}<T_\be(1)<\frac{[\be_0 T_{\be_0}(1)]+1}{\be}\]
whenever $\be\in(\be_0-\del_1, \be_0+\del_1)$. Inductively, we obtain that there is a positive number $\del$ such that 
\[\frac{[\be_0 T^n_{\be_0}(1)]}{\be}<T_{\be}^n(1)<\frac{[\be_0 T^n_{\be_0}(1)]+1}{\be}\]
for $1\leq n\leq N$ whenever $\be\in(\be_0-\del, \be_0+\delta)$, which gives the first statement.

We shall see that $||V-(I+S)||<1/\chi(1)$ if $\be(\om)\in(\be_0-\ep_0, \be_0+\ep_0)$ for $\mP$-a.e. $\om\in\Om$. We have 
\begin{linenomath}
\begin{align*}
||V-(I+S)||
&\leq\sum_{n=1}^\infty\Biggl|\frac{T_{\be_0}^n(1)}{\be_0^n}-\frac{\tau_{\sh^{-n}\om}^n(1)}{\be_{\sh^{-n}\om}^{(n)}}\Biggr| \\
&\leq\sum_{n=1}^\infty\Biggl|\frac{T_{\be_0}^n(1)}{\be_0^n}-\frac{\tau_{\sh^{-n}\om}^n(1)}{\be_0^n}\Biggr| 
+\sum_{n=1}^\infty\Biggl|\frac{\tau_{\sh^{-n}\om}^n(1)}{\be_0^n}-\frac{\tau_{\sh^{-n}\om}^n(1)}{\be_{\sh^{-n}\om}^{(n)}}\Biggr| \\
&\leq \sum_{n=1}^\infty\frac{1}{\be_0^n}|T_{\be_0}^n(1)-\tau_{\sh^{-n}\om}^n(1)|
+\sum_{n=1}^\infty\Biggl| \frac{1}{\be_0^n}-\frac{1}{\be_{\sh^{-n}\om}^{(n)}}\Biggr| \\
&\leq \sum_{n=1}^N\frac{1}{\be_0^n}|T_{\be_0}^n(1)-\tau_{\sh^{-n}\om}^n(1)|
+\sum_{n=N+1}^\infty\frac{2}{\be_0^n}+\sum_{n=1}^\infty\Biggl( \frac{1}{(\be_0-\ep_0)^n}-\frac{1}{\be_0^n}\Biggr). 
\end{align*}
\end{linenomath}
As $0<\ep_0\leq\delta$, we know that 
\begin{linenomath}
\begin{align*}
\sum_{n=1}^N\frac{1}{\be_0^n}|T_{\be_0}^n(1)-\tau_{\sh^{-n}\om}^n(1)|
&\leq \sum_{n=1}^N\frac{[\be_0]}{\be_0^n}\Biggl| \frac{1}{\be_0^n}-\frac{1}{\be_{\sh^{-n}\om}^{(n)}}\Biggr| \\
&\leq \sum_{n=1}^\infty\frac{[\be_0]}{\be_0^n}\Biggl( \frac{1}{(\be_0-\ep_0)^n}-\frac{1}{\be_0^n}\Biggr).
\end{align*}
\end{linenomath}
Then 
\begin{linenomath}
\begin{align*}
&\sum_{n=1}^N\frac{1}{\be_0^n}|T_{\be_0}^n(1)-\tau_{\sh^{-n}\om}^n(1)|
+\sum_{n=N+1}^\infty\frac{2}{\be_0^n}+\sum_{n=1}^\infty\Biggl( \frac{1}{(\be_0-\ep_0)^n}-\frac{1}{\be_0^n}\Biggr) \\
&\leq\frac{1}{2\chi(1)}+\sum_{n=1}^\infty\Biggl(1+\frac{[\be_0]}{\be_0^n}\Biggr)\Biggl( \frac{1}{(\be_0-\ep_0)^n}-\frac{1}{\be_0^n}\Biggr)  \\
&\leq\frac{1}{2\chi(1)}+2\Biggl(\frac{1}{\be_0-\ep_0-1}-\frac{1}{\be_0-1}\Biggr) \\
&\leq \frac{1}{2\chi(1)}+\frac{2\ep_0}{(\be_0-1)(\be_0-\ep_0-1)}.
\end{align*}
\end{linenomath}
Since $0<\ep_0\leq\ep_1$ and $0<\ep_0\leq\ep_2$, we obtain
\[\frac{2\ep_0}{(\be_0-1)(\be_0-\ep_0-1)}<\frac{2\ep_2}{(\be_0-1)([\be_0]-1+\langle\be_0\rangle/2)}<\frac{1}{2\chi(1)},\]
which gives the conclusion.

\end{proof}

\begin{theorem}\label{Thm D}
Let $\ep_0>0$ be the positive number defined as in Lemma \ref{lemma D-1}. If $\be(\om)\in(\be_0-\ep_0, \be_0+\ep_0)$ for $\mP$-a.e. $\om\in\Om$, then the bounded linear operator $I+S:L^1(\mP)\to L^1(\mP)$, where $S$ is the operator defined in Lemma \ref{lemma C-1}, is invertible. Then the function  
\[\varphi(\om,x)=\sum_{n=0}^\infty\frac{c(\sh^{-n}\om)\1_{[0, \tau_{\sh^{-n}\om}^{n}(1)]}(x)}{\be^{(n)}_{\sh^{-n}\om}}\]
for $(\om,x)\in\Om\times[0,1]$, where $c(\om)=(I+S)^{-1}1$, is a positive function in $L^1(\mP\times l)$ with $\int\vp d\mP\times l=1$ and satisfies 
\[\cL_{\om}\vp_\om=\vp_{\sh\om}.\]
That is, $\vp$ is the density function of $\mu$. 

\end{theorem}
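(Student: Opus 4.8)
My plan is to reproduce the proof of Theorem \ref{Thm C} essentially line by line, changing only the mechanism by which $I+S$ is inverted, and then to promote the resulting solution of (\ref{PF}) to the genuine invariant density. The new input is the invertibility of $I+S$ in this weakly expanding regime: under the standing hypothesis $\be(\om)\in(\be_0-\ep_0,\be_0+\ep_0)$ for $\mP$-a.e.\ $\om$, Lemma \ref{lemma D-2} supplies $\|V-(I+S)\|<1/\chi(1)$, which is precisely the condition under which the factorization of $I+S$ and the Neumann-series representation of its inverse are set up in the discussion preceding Lemma \ref{lemma D-2} (using the invertibility of $I+V$ and the operator-norm bound for $(I+V)^{-1}$ from Lemma \ref{lemma D-1}). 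Hence $I+S$ is invertible on $L^1(\mP)$; since $U_{\sh^{-1}}$, $S$ and $V$ preserve $L^\infty(\mP)$ with the same norm estimates, it is invertible on $L^\infty(\mP)$ as well, so that $c:=(I+S)^{-1}1$ is well defined and lies in $L^\infty(\mP)$.

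Next I would check that this $c$ solves (\ref{functional equation}), i.e.\ $(I-T)c=0$. This is immediate from Lemma \ref{lemma C-1}: writing $I-T=(I-U_{\sh^{-1}})(I+S)$ gives $(I-T)c=(I-U_{\sh^{-1}})(I+S)(I+S)^{-1}1=(I-U_{\sh^{-1}})1=0$, because $U_{\sh^{-1}}1=1$. Granting (\ref{functional equation}), the computation in the proof of Theorem \ref{Thm C}, namely the term-by-term application of Lemma \ref{key} to $\vp$, carries over verbatim and shows that $\vp$ satisfies (\ref{PF}) for $\mP$-a.e.\ $\om$. For the remaining assertions I would use that every factor of $\be^{(n)}_{\sh^{-n}\om}$ exceeds $\be_0-\ep_0>1$ (here $\be_0-\ep_0>1$ since $\ep_0\le\ep_1$ and $\be_0$ is non-simple), so that $|\vp|\le\|c\|_\infty\sum_{n\ge0}(\be_0-\ep_0)^{-n}<+\infty$ and $\vp\in L^1(\mP\times l)$; integrating $\vp$ first in $x$ (using $\int_0^1\1_{[0,\tau_{\sh^{-n}\om}^n(1)]}\,dl=\tau_{\sh^{-n}\om}^n(1)$) and then in $\om$ collapses the tail into $\int_\Om Sc\,d\mP$, whence $\int\vp\,d(\mP\times l)=\int_\Om(I+S)c\,d\mP=\int_\Om 1\,d\mP=1$.

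The main obstacle is the positivity of $\vp$ together with its identification with the density of $\mu$. In this regime one need not have $\|S\|<1$, so the sign-controlled expansion $(I+S)^{-1}=\sum_n(-S)^n$ used for Theorem \ref{Thm C} is unavailable and positivity cannot be read off the series for $c$. I would instead exploit the positivity of each $\cL_\om$. Since $\cL_\om$ is a positive operator, $\cL_\om|\vp_\om|\ge|\cL_\om\vp_\om|=|\vp_{\sh\om}|$; integrating in $x$ and using that $\cL_\om$ preserves the integral shows that $\om\mapsto\int_0^1|\vp_\om|\,dl$ does not increase under $\sh$, hence is $\sh$-invariant and therefore $\mP$-a.e.\ constant by ergodicity. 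This forces the equality $\cL_\om|\vp_\om|=|\vp_{\sh\om}|$ for $\mP$-a.e.\ $\om$, so $|\vp|/\|\vp\|_1$ is a non-negative solution of (\ref{PF}) with unit integral; by the uniqueness of the absolutely continuous $R$-invariant probability measure (Theorem 0.3 and Example 0.4 in \cite{Bu}) it equals the density $\vp^\ast$ of $\mu$, i.e.\ $|\vp|=\|\vp\|_1\,\vp^\ast$. Finally, equality in $|\cL_\om\vp_\om|=\cL_\om|\vp_\om|$ means that for a.e.\ $x$ the values of $\vp_\om$ at the $\tau_\om$-preimages of $x$ share a common sign; writing $\vp=\sigma|\vp|$ with $\sigma\in\{\pm1\}$ on the support of $\vp^\ast$, this is exactly the assertion $\sigma\circ R=\sigma$, so $\sigma$ is constant by ergodicity of $(R,\mu)$ (which holds since $\mu$ is the unique such measure), and the normalization $\int\vp\,d(\mP\times l)=1>0$ fixes this constant to be $+1$. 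Therefore $\vp=\vp^\ast\ge0$ is the density of $\mu$. The points that will require the most care are the passage from the integrated inequality to the pointwise equality $\cL_\om|\vp_\om|=|\vp_{\sh\om}|$ and the ergodicity-of-$(R,\mu)$ step used to conclude that $\sigma$ is constant.
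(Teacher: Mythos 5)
Your proposal reproduces the paper's proof in its essential steps: the invertibility of $I+S$ is obtained exactly as in the paper, from the factorization $I+S=(I-(V-(I+S))V^{-1})V$ together with Lemma \ref{lemma D-1} and the norm bound of Lemma \ref{lemma D-2}; the identity $(I-T)c=(I-U_{\sh^{-1}})(I+S)(I+S)^{-1}1=0$ and the resulting verification of $\cL_\om\vp_\om=\vp_{\sh\om}$ via the computation from Theorem \ref{Thm C} are the same; and the normalization $\int\vp\,d(\mP\times l)=\int(I+S)c\,d\mP=1$ is the paper's own calculation. Where you go beyond the paper is in the positivity of $\vp$: the paper asserts it but its proof does not address it, and you correctly observe that, unlike in Theorem \ref{Thm C}, the inverse $(I+S)^{-1}=V^{-1}\sum_n\{(V-(I+S))V^{-1}\}^n$ is not sign-controlled (the Maclaurin coefficients $t_n$ of $\chi$ need not be nonnegative), so positivity of $c$, hence of $\vp$, cannot be read off the series. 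Your substitute argument --- positivity of $\cL_\om$ gives $\cL_\om|\vp_\om|\ge|\vp_{\sh\om}|$, integration and $\mP$-invariance of $\sh$ upgrade this to equality a.e., uniqueness of the absolutely continuous $R$-invariant measure identifies $|\vp|/\|\vp\|_1$ with the density of $\mu$, and the equality case of the triangle inequality plus ergodicity of $(R,\mu)$ pins the sign to $+1$ via $\int\vp\,d(\mP\times l)=1$ --- is sound and actually repairs a gap in the published proof rather than introducing one. The only points to write out carefully are the ones you already flag: that $g(\om)=\int_0^1|\vp_\om|\,dl$ is in $L^1(\mP)$ and satisfies $g\circ\sh\le g$, hence $g\circ\sh=g$ a.e.\ by measure preservation, and that ergodicity of $\mu$ follows from its uniqueness among absolutely continuous invariant measures.
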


\begin{proof}
By Lemma $\ref{lemma D-1}$ (2) and the equality 
\[I+S=(I-(V-(I+S))V^{-1})V,\]
if $||(V-(I+S))V^{-1}||<1$, the operator $I+S$ has the inverse. 
By Lemma $\ref{lemma D-2}$, we have $||(V-(I+S))V^{-1}||<1$
by the assumption $\be(\om)\in(\be_0-\ep_0, \be_0+\ep_0)$ for $\mP$-a.e. $\om\in\Om$.
Note that 
\[\int\vp\ d\mP\times l=\int(I+S)c \ d\mP\times l=1.\]
As in the proof of Theorem \ref{Thm C}, we can conclude that $\cL_{\om}\vp_\om=\vp_{\sh\om}$
if $c(\om)$ is in the kernel of $I-T$, which follows from Lemma \ref{lemma C-1}. This finishes the proof. 
\end{proof}

\end{subsection}
\end{section}

{\bf Acknowledgements.}
The author would like to thank Takehiko Morita, Hiroki Takahasi and Yuto Nakajima for their valuable comments. This work was supported by JSPS KAKENHI Grant Number 20K14331.

\bibliographystyle{amsplain}
\addcontentsline{toc}{section}{References}

\end{document}